\newif\ifpictures
\picturestrue

\documentclass[12pt]{amsart}
\usepackage{amssymb}
\usepackage{amsmath}
\usepackage{amscd}
\usepackage[pdftex]{graphicx}
\usepackage{hyperref}
\usepackage{bbm} % for the black board 1
\usepackage{bm} % \bm X gives bold X
\usepackage{enumitem}
\usepackage[textsize=tiny]{todonotes}
\usepackage{makecell}

\usepackage{xparse}

\numberwithin{equation}{section}
\newtheorem{thm}{Theorem}

\newtheorem{lemma}[thm]{Lemma}
\newtheorem{cor}[thm]{Corollary}

\theoremstyle{definition}

\newtheorem{example}[thm]{Example}
\newtheorem{remark1}[thm]{Remark}
\newtheorem{openproblem1}[thm]{Open problem}
\newtheorem{definition}[thm]{Definition}

\newenvironment{rem}{\begin{remark1}\rm}{\end{remark1}}

\numberwithin{thm}{section}

\newcounter{FNC}[page]
\def\newfootnote#1{{\addtocounter{FNC}{2}$^\fnsymbol{FNC}$%
     \let\thefootnote\relax\footnotetext{$^\fnsymbol{FNC}$#1}}}

\newcommand{\V}{\mathbb{V}}

\newcommand{\N}{\mathbb{N}}

\newcommand{\R}{\mathbb{R}}

\newcommand{\Z}{\mathbb{Z}}

\newcommand{\ones}{\mathbf{1}}
\newcommand{\primal}{\mathcal{P}}
\newcommand{\dual}{\mathcal{D}}

\DeclareMathOperator{\diag}{diag}

\DeclareMathOperator{\tr}{tr}

\DeclareMathOperator{\aux}{\mathrm{aux}}

\newcommand{\sym}{\mathcal{S}}

\DeclareDocumentCommand\OB{ g }{%
    { \mathcal{O}_B \IfNoValueF {#1} { \left( #1 \right) } } }

\DeclareDocumentCommand\OO{ g }{%
    {\mathcal{O} \IfNoValueF {#1} { \mleft( #1 \mright) } } }

\DeclareDocumentCommand\sOB{ g }{ %
    {\widetilde{\mathcal{O}}_B \IfNoValueF {#1} { \mleft( #1 \mright) } } }

\DeclareDocumentCommand\sOO{ g } {%
    {\widetilde{\mathcal{O}} \IfNoValueF {#1} { \mleft( #1 \mright) } } }

\title[Equivalence of SDP and zero-sum 
semidefinite games]{On the equivalence of semidefinite programming and zero-sum semidefinite
games}

\author[J.~Elliott]{Jesse Elliott}

\address{Jesse Elliott: Sorbonne Universit\'e and Inria Paris.
        IMJ-PRG, 4 place Jussieu,
        75005 Paris, France}

\email{jesse.elliott@inria.fr}

\author[C.~Ickstadt]{Constantin Ickstadt}

\author[T.~Theobald]{Thorsten Theobald}

\address{Constantin Ickstadt, Thorsten Theobald:
        Goethe-Universit\"at, FB 12 -- Institut f\"ur Mathematik,
        Postfach 11 19 32, 60054 Frankfurt am Main, Germany}

\email{ickstadt@math.uni-frankfurt.de,
        theobald@math.uni-frankfurt.de}

\author[E.~Tsigaridas]{Elias Tsigaridas}

\address{Elias Tsigaridas: Sorbonne Universit\'e, Paris University, CNRS and Inria Paris.
        IMJ-PRG,  4 place Jussieu,
        75252 Paris Cedex 05, France}

\email{elias.tsigaridas@inria.fr}

\date{\today}

\begin{document}

\begin{abstract}
By results of Dantzig (1951) and Adler (2013), computing the optimal solutions
of a linear program is equivalent to finding optimal strategies in zero-sum
bimatrix games. Dantzig's original result was incomplete, in the sense that the
reduction of a linear program to a zero-sum game did not work for all possible
linear programs.

We show that, under a natural constraint qualification requiring either the
existence of strongly optimal primal-dual solutions 
or of a strictly unbounded direction, 
computing the solution of a semidefinite program is equivalent to
finding optimal strategies in an associated zero-sum semidefinite game. 
Our work builds upon Ickstadt, Theobald, and Tsigaridas (2024), where, similar to
Dantzig's work, the proposed reduction cannot handle a certain subclass of
semidefinite programs.
Our main proof ingredients for the equivalence result include: 
(i)~a semidefinite generalization of von Stengel's (2023) extension of Dantzig's
construction; 
(ii)~techniques for handling more general duality phenomena in the
semidefinite setting; and 
(iii)~an explicit bound for the (coordinates) of the solutions of a semidefinite program. 
As a by-product, the game value provides a
certificate: it is zero if and only if strongly optimal solutions exist, and otherwise
optimal strategies yield an infeasibility certificate for the primal or dual program.
\end{abstract}

\newpage

\maketitle

\section{Introduction}
Semidefinite programming provides an important generalization
of linear programming that allows one to solve optimization problems 
on the cone of positive semidefinite matrices~\cite{boyd2004convex,wolkowicz2000handbook}. 
It contains linear programming as a special case: a semidefinite
program (SDP) reduces to a linear program (LP)
when all constraint matrices are diagonal. This additional generality gives semidefinite programming strictly more expressive power than 
linear programming; it can model a significantly broader 
class of problems arising, for example, in combinatorial optimization~\cite{goemans1995maxcut, 
lovasz1979shannon}, polynomial 
optimization~\cite{blekherman2012sdp,ragopt-book}
or quantum information theory \cite{watrous-book}. However, the additional expressive power introduces complications that do not arise in linear programming, such as the possibility of a positive duality gap or non-attainment of the optimal value~\cite{pataki2000geometry,pataki2017bad,ramana1997exact}.

Linear programming has strong links to fundamental
branches in game theory and this close
connection has been studied continuously throughout the development 
of both fields. A key result states that 
computing an optimal solution to an LP is equivalent to finding optimal strategies in zero-sum bimatrix games. This was first observed by Dantzig \cite{dantzig-1951-equivalence} in 1951.
However, Dantzig pointed
out that his reduction of an LP to a zero-sum game did not work in
all cases. In 2012, Adler \cite{adler-2013} filled the gap by transforming a given LP into a linear feasibility problem and reducing that linear feasibility problem to a zero-sum game.
In this way, we can reduce any LP to a zero-sum game 
and we can recover a solution of the original LP from 
the optimal strategies of the game if the LP had optimal solutions to begin with. 
If the LP does not have an optimal solution, then the primal or the dual
program must be infeasible; in this case the reduction provides
certificates of the infeasibility.

In the more recent years,  
Brooks and Reny \cite{brooks_reny} and von Stengel \cite{vonstengel}
introduced alternative reductions of LPs from zero-sum games 
which are based on modified Dantzig constructions.
In these reductions, we can always recover solutions 
of the original LP, if they exist. 
Additionally, the authors of these works prove, using their constructions, 
that the theorem of strong duality in linear optimization follows from von Neumann's minimax theorem~\cite{von-neumann-1945}.

Also quite recently, a first step was taken to elevate the equivalence
of linear programming and zero-sum bimatrix games to the semidefinite
setting. Ickstadt et al.~\cite{semi-games}
introduced \textit{semidefinite games} in which each player's strategy is a density matrix, i.e., 
a positive semidefinite matrix with trace one. 
We can consider these games as the real-valued 
version of one-round quantum games, as described by Gutoski 
and Watrous~\cite{watrous1}.
A nonlinear extension of the classical Lemke-Howson
algorithm for computing Nash equilibria to the semidefinite case
appears in~\cite{itt-lemke-howson}.
Similarly to the classical matrix case, where
we can compute the optimal strategies and the value of a zero-sum game
using linear programming, for zero-sum semidefinite games one can 
employ semidefinite programming.

Based on Dantzig's construction, a reduction of SDPs
to zero-sum semidefinite games appears in \cite{semi-games}, where an optimal strategy of the game is (up to scaling) a solution of the initial SDP.
However, this reductions has the  same limitation as Dantzig's original work;
that is, there exist cases in which a solution of the SDP cannot be recovered.
For the set of LPs or SDPs for which 
this happens, we do not know an explicit description.
For a further extension regarding conic programming we refer to the work of Dimou~\cite{dimou-equiv-2024}.

Our goal is to show that we can elevate the improvements 
of Dantzig's construction for the LP case to the semidefinite setting. We introduce a modified reduction of an SDP 
to a zero-sum semidefinite game that is based on von~Stengel's approach 
for LPs \cite{vonstengel}. For any given SDP we construct a 
\textit{modified semidefinite Dantzig game} $G_M$, where $M$ 
is a \emph{solution bound}, that is, a constant large enough to dominate the optimal
solution of an auxiliary SDP  associated with the input; 
see~\eqref{eq:def:M}. 
The role of $M$ is similar to the bound employed in von Stengel's modification of Dantzig's
construction in the linear case: it forces the relevant coordinate
of an optimal strategy to be positive and thereby makes it possible
to recover optimal primal/dual solutions from the game. 
When the input data consists of rational numbers, 
then we derive an explicit value for $M$ from coordinate bounds 
for the KKT system of the SDP; see Section~\ref{se:M}. 
The game $G_M$ has value zero if and only if a pair of strongly optimal 
solutions of the original SDP exists
and, in this case, we can recover the solutions from optimal strategies of $G_M$. 
We present the details in Theorem~\ref{th:reduction}. 

As a consequence,
if the constraint qualification
of assuming strongly optimal solutions is satisfied
(see Definition~\ref{de:constraint-qualification}),
then the result provides a full equivalence of a pair of bounded SDPs to a
semidefinite game, see Corollary~\ref{co:summary}.
The result also provides a characterization of
whether strongly optimal solutions for a given SDP
exist. We discuss the constraint qualification in detail in Section~\ref{se:reduction}.

In the case where the SDP or its dual has 
a strictly unbounded direction (this is formally
defined in Definition~\ref{def:strict-unbounded-direction}), the optimal strategies
of the modified semidefinite Dantzig game $G_M$ provide a certificate for the
unboundedness of that SDP, see Theorem~\ref{th:reduction-unbounded} and Corollary~\ref{co:summary}.
Similar to Theorem~\ref{th:reduction}, the constraint qualification
in Theorem~\ref{th:reduction-unbounded} is discussed in detail
at the beginning of Section~\ref{se:reduction}.

Compared to the earlier reduction of SDPs by Ickstadt et al.~\cite{semi-games},
the current work fills the gap in the reduction in a
way which generalizes the LP setting.
That is, whenever a pair of optimal solutions exists in an SDP with no duality gap, then we recover such solutions. 
This was not the case in the reduction by Ickstadt et al.~\cite{semi-games} 
as, similar to the limitations of Dantzig's original (LP) construction,
even for certain simple examples, the reduction does not 
recover the optimal solution.
We refer to Example~\ref{ex:limitation2} for a specific example.
Our explicit assumptions also implicitly underlie the work
of Ickstadt et al.~\cite{semi-games} and the cases covered there are
a subclass of the cases covered here. 
Only the cases that do not meet our constraint qualification
remain and, as we argue in detail at the beginning
of Section~\ref{se:reduction},
there is a good reason to remove these cases from the discussion.
Namely, the excluded SDPs have a degenerate duality
behavior that is also excluded in many works on SDPs.
Moreover, this degeneration is not caused by the viewpoint of games.
Similarly, we completely cover the class of 
unbounded SDPs which satisfy our constraint qualification.
This class was not completely captured by~\cite{semi-games}.

From the viewpoint of the techniques we employ, handling strictly unbounded SDPs requires substantially more work than handling bounded SDPs. In
particular, our proofs address the additional complications
that emanate from the possibility that the 
optimal value(s) in an auxiliary SDP 
(formally defined in Definition~\ref{def:aux-problem})
might not be attained.
Such situations cannot
arise in the reduction of LPs to zero-sum matrix games
and thus were not addressed in \cite{vonstengel}.

An important ingredient in the construction is the (solution) bound $M$
that is large enough to dominate the optimal solution of an associated 
auxiliary SDP; see~\eqref{eq:def:M}.
For this, we exploit the bounds on the isolated roots of sparse and structured 
polynomial systems \cite{emt-dmm-jsc-20}.
If the input consists of matrices with rational entries,
then we show that we can bound $M$ explicitly
in terms of the bitsize of the input, say $\tau$,
and the problem dimensions, $m$ and $n$.
The bitsize bound is polynomial in $\tau$ and exponential in $m$ and $n$.
The bound is nearly optimal (Example~\ref{ex:opt-M-bd}).
\medskip

The paper is structured as follows. Section~\ref{se:prelim} provides background
on linear and semidefinite programming, the equivalence of linear programming
with zero-sum bimatrix games and on semidefinite games. 
In Section~\ref{se:reduction} we introduce the modified Dantzig game for
semidefinite programming and address the equivalence of semidefinite programming
to semidefinite games.
Section~\ref{se:M} provides explicit coordinate bounds for the 
solutions of the associated KKT system and uses them to derive
an explicit value of the solution bound $M$
for rational input data.
Section~\ref{se:examples} gives some examples and 
we close the paper with a short outlook in
Section~\ref{se:outlook}.

\section{Preliminaries}\label{se:prelim}

We denote the set of symmetric real $n\times n$ matrices by $\sym_n$. 
A matrix $X \in \sym_n$ is called \emph{positive semidefinite}
if all its eigenvalues are nonnegative. In this case, we write $X\succeq 0$ and we call the set of positive semidefinite matrices $\sym_n^+$. A matrix $X$ is \emph{negative semidefinite} if $-X$ is positive semidefinite. We write the all-ones vector as $\ones$ whenever the dimension is clear from the context. 
Set $[n]:=\{1, \ldots, n\}$. In addition, $\R_+$ denotes the set of nonnegative real numbers. 

\subsection{Linear and semidefinite programming}

A \textit{linear program} (LP) is an optimization problem of finding a vector which minimizes a given linear objective function under affine-linear constraints. We consider linear programs in the primal normal form
\begin{equation}
 \label{eq:lp:primal}
 \min_{x\in \R^n} \{ c^Tx \, : \, Ax\ge b, \, x\ge 0 \} \, ,
\end{equation}
where $c\in \R^n, b \in \R^m, A\in \R^{m\times n}$.
Its dual program is
\begin{equation}
  \label{eq:lp:dual}
  \max_{y\in \R^m} \{ b^Ty \, : \, A^T y \le c, \, y\ge 0 \} \, .
\end{equation}

A \emph{semidefinite program} (SDP) is an optimization problem of finding
a vector which minimizes a linear objective function over an affine-linear 
slice of the positive semidefinite cone. We consider semidefinite
programs in the primal normal form
\begin{eqnarray} \label{eq:SDP_primal-prelim} 
&& \inf_{X} \{ \langle C , X \rangle : \, \langle A_i, X\rangle \ge b_i, \; i \in [m], \, X \in \sym_n^+  \},
\end{eqnarray}
where $A_1, \ldots, A_m, C \in \sym_n$ and $b\in \R^m$.
Its dual SDP is
\begin{eqnarray}
  \sup_{y} \{ b^Ty : \, \sum_{i=1}^m y_i A_i \preceq  C, \, y\in \R^m_+   \} \label{eq:SDP_dual-prelim} \, .
\end{eqnarray}
Any vector which satisfies the constraints of a given LP or a given
SDP is called a \textit{feasible solution} of the problem. The problem 
is called \emph{feasible} if a feasible solution exists.

Unlike linear programming, semidefinite programming can have a pathological (or degenerate) behavior 
even when both the primal and dual programs are feasible, 
see e.g.~\cite{boyd2004convex,wolkowicz2000handbook}
for a detailed treatment.
In particular, strong duality can fail and the optimal value 
may not be attained by any feasible solution. 
Such pathological SDPs have been studied by Pataki~\cite{pataki2000geometry} and Tun\c{c}el~\cite{tunccel2016polyhedral}, who 
characterize the facial structure of the positive semidefinite cone
that is responsible for the duality gaps and for the fact that an
optimum value might not be attained.
A sufficient condition that ensures that these degenerate cases do not occur
is \emph{Slater's condition}: the primal satisfies Slater's condition 
if there exists a strictly feasible point $X \succ 0$ with $\langle A_i, X \rangle 
\geq b_i$ for all $i \in [m]$, and the dual satisfies Slater's condition if there 
exists $y \geq 0$ with $\sum_{i=1}^m y_i A_i \prec C$. When both the primal and 
dual satisfy Slater's condition, strong duality holds and both the infimum and 
supremum are attained. 
Moreover, conditioned on feasibility, SDPs satisfying 
Slater's condition are \emph{generic}~\cite[Theorem~4.3]{djs-generic-conic-2017}.
We should interpret this as follows: in the space of the input data,
the set of matrices  $(C, A_1, \ldots, A_m, b)$
that do not satisfy Slater's condition are of measure zero.

\subsection{The equivalence of linear programming and zero-sum bimatrix games}
\label{ssse:Dantzig_reduction}

A \textit{bimatrix game} is a game between two players played on two matrices $A,B \in \R^{d_1\times d_2}$ where one player chooses a row $i\in [d_1]$ and the other player chooses a column $j\in [d_2]$ of $A$ and $B$ and the corresponding entries $A_{ij}$ and $B_{ij}$ define the payoffs to either player. The set of \emph{(mixed) strategies} 
is the set of probability vectors over the pure strategies, 
$\Delta_1 = \{ x\in \R^{d_1}_+ \, : \, \ones^T x = 1 \}$
and 
$\Delta_2 = \{ y\in \R^{d_2}_+ \, : \, \ones^T y = 1 \}$.
The \emph{(expected) payoffs} for the players are $p_1(x,y)=x^TAy$ and $p_2(x,y)=x^TBy$. A special case of bimatrix games are \textit{zero-sum} bimatrix games where 
one player's gain is the other player's loss, i.e., $B=-A$.

Optimal strategies of bimatrix games can be viewed as optimal solutions of a minimax optimization problem. Furthermore, if both players play optimal strategies, then the expected payoff of the
first player is independent of the exact choice of the optimal strategies. 
This payoff $v$ is called the \textit{value} of the game. It
satisfies the following minimax equation
which goes back to von Neumann \cite{von-neumann-1945},
\begin{equation}\label{eq:lp:minimax}
\min_{y\in \Delta_2}\max_{x\in \Delta_1} x^TAy =v = \max_{x\in \Delta_1}\min_{y\in \Delta_2}x^TAy \, .
\end{equation}
Von Neumann also proved that the problem of finding optimal solutions for any zero-sum 
bimatrix game can be reduced to solving an LP.
The reverse task of reducing the optimal solutions of an arbitrary LP to optimal strategies of an associated 
zero-sum bimatrix game was addressed by Dantzig \cite{dantzig-1951-equivalence}
in the following result.

\begin{thm}[Dantzig \cite{dantzig-1951-equivalence}]
\label{th:dantzig-lp}
Let $A\in \R^{m\times n},\, b\in \R^m$ and $c\in \R^n$ and consider the corresponding linear program with its primal program \eqref{eq:lp:primal} and dual program \eqref{eq:lp:dual}.
Construct the zero-sum bimatrix game defined by the payoff matrix 
$P \in \R^{(m+n+1)\times (m+n+1)}$ for the first player with
\begin{equation*}
P=\begin{pmatrix}
0 & A & -b\\
-A^T & 0 & c\\
b^T & -c^T &0
\end{pmatrix} \, .
\end{equation*}
Then there is a strategy $z=(y,x,t)\in \R^m \times \R^n \times \R$ which is an optimal strategy for both players and $t\cdot(b^Ty - c^Tx)=0$.

If $t>0$ then $\frac{1}{t}x$ is an optimal solution of the primal program \eqref{eq:lp:primal} and $\frac{1}{t}y$ is an optimal solution of the dual program \eqref{eq:lp:dual}.

If $b^Ty - c^Tx < 0$ then $x$ is
an unbounded direction of~\eqref{eq:lp:primal}
and~\eqref{eq:lp:dual} is infeasible or
$y$ is an unbounded direction of~\eqref{eq:lp:dual}
and~\eqref{eq:lp:primal} is infeasible.
\end{thm}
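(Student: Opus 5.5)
The plan is to exploit the skew-symmetry $P^T=-P$: the game has value $0$ and admits a symmetric optimal strategy. The optimality conditions for that strategy then become a homogenized feasibility system for the LP pair, which can be read off coordinatewise. The hard part is not the game theory but the signs, which need care.

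First I show the game value is $0$. For any strategy $w$ we have $w^TPw=0$, so $\max_x \min_{y} x^TPy \le 0 \le \min_y \max_x x^TPy$. Hence $v=0$ by \eqref{eq:lp:minimax}. If $z$ is optimal for player~1, then $z^TPw\ge 0$ for all $w$. Skew-symmetry gives $w^TPz=-z^TPw\le 0$ for all $w$, so $z$ is also optimal for player~2. Both conditions are equivalent to $Pz\le 0$ componentwise. Writing $z=(y,x,t)\ge 0$ with $\ones^Tz=1$, this says
\[
Ax\le tb,\qquad A^Ty\ge tc,\qquad b^Ty\le c^Tx .
\]
The identity $t(b^Ty-c^Tx)=0$ comes from complementary slackness. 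Since $z^TPz=0$ and every summand of $z^T(Pz)=\sum_k z_k (Pz)_k$ is $\le 0$, each summand vanishes. The last coordinate gives $t\,(b^Ty-c^Tx)=0$, and similarly $y^T(Ax-tb)=0$ and $x^T(A^Ty-tc)=0$.

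If $t>0$, I divide by $t$. Then $\bar x=x/t\ge0$ and $\bar y=y/t\ge0$ satisfy the two constraint systems above, and $b^T\bar y=c^T\bar x$ (from $t(b^Ty-c^Tx)=0$). Weak duality between the two systems (multiply $A\bar x\le b$ by $\bar y$ and $A^T\bar y\ge c$ by $\bar x$) gives $c^T\bar x\le \bar y^TA\bar x\le b^T\bar y$. Equality of the objectives then shows both are optimal.

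If $b^Ty-c^Tx<0$, then $t=0$, so $Ax\le 0$, $A^Ty\ge 0$, $x,y\ge 0$ and $b^Ty<c^Tx$. Consequently either $c^Tx>0$ or $b^Ty<0$.
\begin{itemize}
\item In the first case $x$ is an unbounded direction: adding $\lambda x$ to a feasible point keeps feasibility and moves the objective by $\lambda c^Tx$. The other program is infeasible, because a feasible $y'$ would give $0<c^Tx\le y'^TAx\le 0$.
\item The second case is symmetric, with the roles of $x$ and $y$ exchanged.
\end{itemize}

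The step I expect to be the real obstacle is matching these inequalities to the exact forms \eqref{eq:lp:primal} and \eqref{eq:lp:dual} with the stated payoff convention. With $P$ as displayed and player~1 maximizing $z^TPw$, optimality yields $Ax\le tb$ and $A^Ty\ge tc$, not the $\ge$/$\le$ of \eqref{eq:lp:primal}--\eqref{eq:lp:dual}. The conclusions are therefore literally about the pair $\max\{c^Tx: Ax\le b,\,x\ge0\}$ and $\min\{b^Ty: A^Ty\ge c,\,y\ge0\}$. This is equivalent to \eqref{eq:lp:primal}--\eqref{eq:lp:dual} under the substitution $(A,b,c)\mapsto(-A,-b,-c)$, equivalently after transposing which player the payoff $P$ refers to.

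I would first fix this convention explicitly. Then I would check that under it the hypothesis $b^Ty-c^Tx<0$ is exactly the strict case of the last row inequality. Finally I would translate ``$c^Tx>0$ with $Ax\le 0$'' into ``objective decreases along $x$ while feasibility is preserved'' for \eqref{eq:lp:primal}, and likewise for $y$ and \eqref{eq:lp:dual}. The rest of the argument is routine.
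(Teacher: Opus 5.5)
Your argument is correct and is essentially the paper's proof: skew-symmetry gives value $0$ and a strategy optimal for both players, you read off the block inequalities, divide by $t>0$ and apply weak duality, and use $t=0$ for the certificates; your complementary-slackness derivation of $t(b^Ty-c^Tx)=0$ and your treatment of the $t=0$ case are more explicit than the paper's. Your sign worry is justified and exposes a slip in the paper, which writes $z^TP\ge 0$ but then lists the entries of $Pz=-(z^TP)^T$ (obtaining $Ax\ge tb$, $A^Ty\le tc$, $b^Ty\ge c^Tx$); note, however, that if you keep \eqref{eq:lp:primal}--\eqref{eq:lp:dual} by flipping the convention (equivalently, passing to $-P$), the last-row inequality becomes $b^Ty-c^Tx\ge 0$, so the certificate case must read $b^Ty-c^Tx>0$ (giving $Ax\ge 0$, $c^Tx<0$ or $A^Ty\le 0$, $b^Ty>0$), and your planned check that the hypothesis stays ``$b^Ty-c^Tx<0$'' only goes through for the max/min pair you wrote down.
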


The zero-sum game in the theorem is called \emph{Dantzig game}. 
For later considerations, it is  useful to recall the proof.

\begin{proof}
The zero-sum game defined by $P$ is symmetric in the sense $P=-P^T$. Hence, by the 
minimax condition~\eqref{eq:lp:minimax}, its value $v$
is zero. Let $z=(y,x,t)\in \R^m \times \R^n \times \R$ be an optimal strategy for the second player. By the symmetry of the game, $z$ is also an optimal strategy for the first player. 
Since $v=0$, we see that $z^TP \ge 0$ and therefore
\begin{eqnarray*}
Ax -tb & \ge & 0,\\
-A^Ty + tc & \ge & 0,\\
b^Ty -c^Tx & \ge & 0.
\end{eqnarray*}

Assume $t>0$. Then $b^Ty -c^Tx = 0$, since otherwise one of the players could achieve a positive payoff and $z$ would not be an optimal strategy for both players. Now consider the pair $(\frac{1}{t}x,\frac{1}{t}y)$. The first two inequalities ensure that 
$\frac{1}{t}x$ and $\frac{1}{t}y$ are feasible solutions for the primal and dual program, respectively, and the third inequality proves that they are indeed optimal solutions, since both the primal and the dual program attain the same value. This shows the first claim.

Now assume that $b^Ty - c^Tx < 0$. Then $t=0$ 
and the desired assertion holds.
\end{proof}

Theorem~\ref{th:dantzig-lp} 
provides optimal solutions for the original LP or a certificate that such solutions do not exist, because at least one of the programs is infeasible. However, there is one case in which the
reduction does not apply and this occurs if $t=0=b^Ty - c^Tx$. Indeed, there exist LPs for which 
we cannot make a statement about the solutions using this reduction. This limitation 
of the reduction was already pointed out by Dantzig.

\begin{example}
\label{ex:limitation1}
A simple example where Dantzig's construction fails
is the pair of LPs with
$A = \left(
  \begin{smallmatrix} 1 & 1 \\ 1 & 1 
  \end{smallmatrix}
  \right)
$,
$b=(1,0)^T$, $c=(1,0)^T$, which has primal and dual optimal
value zero. The strategy $(y,x,t)=(0,1,0,0,0)$ is optimal for both players, but due
to $t=0$, Theorem~\ref{th:dantzig-lp} does not make a statement.
\end{example}

The incompleteness in Theorem~\ref{th:dantzig-lp}
was first eradicated by Adler \cite{adler-2013}, by initially transforming the LP to 
a linear feasibility problem. Variants of such a reduction were given by Brooks and 
Reny \cite{brooks_reny}
as well as by von Stengel \cite{vonstengel}. 
We recall the latter one, since it will be relevant for us.
The construction \textit{always} provides optimal solutions for the original LP or a certificate that such solutions do not exist because at least one of the programs is infeasible. 
This closes the gap in Dantzig's original reduction.

\begin{definition}
Let $A\in \R^{m\times n},\, b\in \R^m$ and $c\in \R^n$ and consider the corresponding linear program with 
its primal program \eqref{eq:lp:primal} and dual program \eqref{eq:lp:dual}. 
Construct the auxiliary LP
\begin{equation}\label{local:aux-problem-LP}
\min_{x \in \R_+^n,y\in \R_+^m,w\in \R_+} \{w \, : \, Ax -w\ones \le b, \, A^T y -w\ones \le c, \, b^Ty - c^Tx -w \le 0 \}
\end{equation}
with some optimal solution $(x^*,y^*,w^*)$.

Let $M\in \R$ such that $\ones^Tx^* + \ones^Ty^* +1 \le M $. 
Then, the 
\emph{modified Dantzig game} 
is the zero-sum bimatrix game defined by the payoff matrix
\begin{equation}
\label{eq:pm}
P_M \ = \ \begin{pmatrix}
0 & A & -b\\
-A^T & 0 & c\\
b^T & -c^T &0\\
\ones^T & \ones^T & -M
\end{pmatrix} \ \in \ \R^{(m+n+2) \times (m+n+1)}.
\end{equation}
\end{definition}

\begin{thm}[von Stengel \cite{vonstengel}]
Let $A\in \R^{m\times n},\, b\in \R^m$ and $c\in \R^n$ and consider the corresponding linear program with its primal program \eqref{eq:lp:primal} and dual program \eqref{eq:lp:dual}. Let $(x^*,y^*,w^*)$ be an optimal solution of the
auxiliary LP~\eqref{local:aux-problem-LP}.
Then, for every $M \in \R$ such that $\ones^Tx^* + \ones^Ty^* +1 \le M$,
the modified Dantzig game has a nonnegative value. There are two distinct cases.
\begin{enumerate}
\item
If $v=0$, let $(y,x,t)$ be an optimal strategy for the second player. 
Then $t>0$ and $\frac{1}{t}x$ is an optimal solution of the primal program \eqref{eq:lp:primal} and $\frac{1}{t}y$ is an optimal solution of the dual program \eqref{eq:lp:dual}.
Moreover, $(t^*x^*,t^*y^*)$ is an optimal strategy for the second player with $t^*=\frac{1}{\ones^Tx^* + \ones^T y^*+1}$.

\item
If $v>0$, let $(y,x,t,u)$ be an optimal strategy for the first player. Then $t=0$, $u=v$ and 
$x$ is and unbounded direction 
for~\eqref{eq:lp:primal} or
$y$ is an unbounded direction 
for~\eqref{eq:lp:dual}, which shows that the primal or dual program is infeasible.
Moreover, $v<1$ and $w^*=\frac{v(M+1)}{1-v}$.
\end{enumerate}
\end{thm}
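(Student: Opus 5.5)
The plan is to read off linear inequalities from the optimality conditions of $P_M$ and compare them with the auxiliary LP~\eqref{local:aux-problem-LP}. Throughout, the first (row) player maximizes $p^TP_Mq$. I fix the sign convention so that, as in the proof of Theorem~\ref{th:dantzig-lp}, a column strategy $q=(y,x,t)$ guaranteeing payoff at most $v'$ satisfies the row-wise inequalities
\[
tb-Ax\le v'\ones,\qquad A^Ty-tc\le v'\ones,\qquad c^Tx-b^Ty\le v',\qquad \ones^Tx+\ones^Ty-Mt\le v'.
\]
These are the primal constraints, the dual constraints, the duality-gap constraint, and one extra row involving $M$. If the sign convention of~\eqref{eq:pm} turns out to be the transposed one, all inequalities are read with the roles of the players swapped; the argument is unchanged.

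\textbf{Nonnegativity of $v$.} Take any column strategy $q=(y,x,t)$. The row player answers with $p=(y,x,t,0)$. Since the upper $(m+n+1)\times(m+n+1)$ block of $P_M$ is skew-symmetric, $p^TP_Mq=0$. Hence $\max_p p^TP_Mq\ge 0$ for every $q$, and therefore $v\ge 0$.

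\textbf{Case $v=0$.} Let $q=(y,x,t)$ be an optimal column strategy, so the four inequalities hold with $v'=0$.
\begin{itemize}
\item If $t=0$, the last inequality gives $\ones^Tx+\ones^Ty\le 0$, so $x=y=0$. This contradicts $\ones^Tq=1$, hence $t>0$.
\item Dividing by $t$, the first two inequalities say that $x/t$ is primal feasible and $y/t$ is dual feasible.
\item The third gives $c^Tx/t\le b^Ty/t$. Combined with weak duality, both are optimal.
\end{itemize}
For the ``moreover'' claim, apply this to get an optimal pair. Then $(x,y,0)$ is feasible for~\eqref{local:aux-problem-LP}, so $w^*=0$. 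Consequently $(x^*,y^*)$ is itself a primal/dual feasible pair with $b^Ty^*\ge c^Tx^*$. The strategy $t^*(y^*,x^*,1)$ then satisfies the first three inequalities with right-hand side $0$. It satisfies the last one because $t^*(\ones^Tx^*+\ones^Ty^*-M)\le -t^*<0$. So it is optimal.

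\textbf{Case $v>0$, and the link to $w^*$.} I set up two estimates.
\begin{itemize}
\item \emph{Upper bound.} Put $s=\ones^Tx^*+\ones^Ty^*+1\le M$. The column strategy $\frac1s(y^*,x^*,1)$ makes every row payoff at most $w^*/s$ in the first three blocks, and at most $(s-1-M)/s<0$ in the last row. Hence $v\le w^*/s$. In particular $w^*=0$ forces $v=0$, so $v>0$ implies $w^*>0$ and both programs cannot have optimal solutions.
\item \emph{Column conditions.} Let $p=(y,x,t,u)$ be an optimal row strategy. Its column conditions $p^TP_M\ge v\ones^T$ read (with the convention above)
\[
Ax-tb+u\ones\ge v\ones,\qquad -A^Ty+tc+u\ones\ge v\ones,\qquad b^Ty-c^Tx-uM\ge v.
\]
\end{itemize}

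\textbf{Showing $t=0$.} First I would try to show that $t>0$ contradicts the minimality of $w^*$. Rescaling the column conditions by $t$ produces a feasible point of the auxiliary LP, or of its dual, with objective related to $(u-v)/t$. Alternatively, I would play $p$ against the column strategy built from $(x^*,y^*,w^*)$ and use complementary slackness against the upper bound $v\le w^*/s$.

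\textbf{Finishing, and the main obstacle.} Once $t=0$, the conditions become $Ax\ge(v-u)\ones$, $A^Ty\le(u-v)\ones$ and $b^Ty-c^Tx\ge v+uM>0$. Two things remain:
\begin{itemize}
\item Show $u=v$. This should follow from optimality: lowering $u$ or raising $v$ would violate the equalities forced on the support of an optimal column strategy.
\item Given $u=v$, we have $Ax\ge 0$, $A^Ty\le 0$ and $b^Ty>c^Tx$. By the standard Farkas-type argument, either $c^Tx<0$, so $x$ is an unbounded direction and the dual is infeasible, or $b^Ty>0$, so $y$ is an unbounded direction and the primal is infeasible.
\end{itemize}
The identity $w^*=v(M+1)/(1-v)$, together with $v<1$, I would obtain by writing the game value as an LP and exhibiting it as a rescaling of~\eqref{local:aux-problem-LP} (variables divided by $1+\ones^Tx+\ones^Ty$ and normalized). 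Then LP duality equates the optima. This exact bookkeeping, and proving $t=0$, $u=v$ cleanly, is the step I expect to be the main obstacle.
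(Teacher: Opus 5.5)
\textbf{Verdict: genuine gap.} Your nonnegativity argument and the whole case $v=0$ are correct, including the check that $t^*(y^*,x^*,1)$ is optimal once $w^*=0$. They coincide with the argument in the proof of Theorem~\ref{th:reduction}. The case $v>0$, however, is not proved. The claims $t=0$, $u=v$, $v<1$ and the identity for $w^*$ are left as intentions, and the routes you sketch do not close.

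\emph{Why your routes fail.} Rescaling the column conditions by $t>0$ only produces a point of the auxiliary LP with $w=(u-v)/t$. That contradicts nothing unless $u=v$ is already known. So $t=0$ has to come \emph{after} $u=v$: then $x/t$ and $y/t$ would be primal and dual feasible with $b^Ty>c^Tx$, violating weak duality. Playing $p$ against $\frac1s(y^*,x^*,1)$ is the right pairing, but what it yields is
\[
sv \;\le\; (1-u)\,w^* + u\,(s-1-M).
\]
This is a \emph{lower} bound on $w^*$, just like your estimate $v\le w^*/s$. To force $u=v$ you need an \emph{upper} bound on $w^*$, and none of your ingredients supplies one. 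Likewise, ``LP duality equates the optima'' cannot work literally. After substituting $\hat x=x/t$, $\hat y=y/t$, the game value is a linear-fractional problem with an extra row involving $M$; it is not a rescaling of the auxiliary LP. Its optimum is tied to $w^*$ only through the nonlinear identity you are trying to prove.

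\emph{Missing ingredients.} These are the steps of the proof of Theorem~\ref{th:reduction-unbounded}, specialized to the LP.

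First, $u<1$: against the pure last row, player~2 answers with $t=1$, giving player~1 the payoff $-M<0$. Next, $u\ge v$: if $u<v$, your column conditions give $Ax-tb>0$, $tc-A^Ty>0$ and $b^Ty-c^Tx>0$. Then $(y,x,t)/(1-u)$ would guarantee a positive payoff in the skew-symmetric subgame $G$, whose value is $0$. Hence $v\le u<1$.

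Second, any optimal column strategy $(y^\circ,x^\circ,t^\circ)$ satisfies the last-row inequality $1-(1+M)t^\circ\le v$. The paper even gets equality, since $u>0$ makes that row a best response. So $t^\circ\ge(1-v)/(1+M)>0$, and $(x^\circ,y^\circ,v)/t^\circ$ is feasible for the auxiliary LP. Thus $w^*\le v(M+1)/(1-v)$.

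Third, insert this upper bound into the displayed inequality (using $1-u\ge 0$). This gives
\[
(u-v)\Bigl(\tfrac{M+1}{1-v}-s\Bigr)\;\le\;0 .
\]
Since $s\le M<\frac{M+1}{1-v}$, you get $u\le v$, hence $u=v$. With $u=v$ the displayed inequality becomes $(1-v)w^*\ge v(M+1)$, so $w^*=v(M+1)/(1-v)$.

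Only then does the weak-duality argument give $t=0$, after which your Farkas-type finish is fine. The paper does the $u\le v$ step as one chain of inequalities pairing the column conditions with the auxiliary optimum that satisfies the $M$-bound; it is essentially the same computation.
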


A major improvement of this reduction compared to Dantzig's is that, through the introduction of the solution bound $M$, the last column is played with positive probability in every optimal strategy. Therefore, it is always possible to divide by this probability and recover optimal solutions of the original LP if those solutions existed in the first place.

\subsection{Semidefinite games}

We consider semidefinite games between two players with the strategy spaces of real-valued density matrices $\mathcal{X} = \{ X \in \sym_{d_1}^+ \, : \, \tr(X) = 1\}$ and $\mathcal{Y} = \{Y \in \sym^+_{d_2} \, : \, \tr(Y) = 1\}$, where $\tr$ denotes
the trace of a matrix. See \cite{semi-games} or 
\cite{itt-semidefinite-network}.
The payoff functions of a semidefinite game on $\sym_{d_1} \times \sym_{d_2}$
are
\begin{eqnarray*}
  p_A(X, Y) \ = \ \sum_{i,j,k,l} X_{ij} A_{ijkl} Y_{kl}
  \; \text{ and } \;
  p_B(X, Y) \ = \ \sum_{i,j,k,l} X_{ij} B_{ijkl} Y_{kl},
\end{eqnarray*}
where $A=(A_{ijkl})_{i,j \in [d_1], l,k \in [d_2]}$ and $B=(B_{ijkl})_{i,j \in [d_1], l,k \in [d_2]}$ 
are the payoff tensors satisfying the symmetry conditions
\[
  \begin{array}{rccccccc}
  & A_{ijkl} & = & A_{jikl} & = & A_{ijlk} & = & A_{jilk} \\
  \text{ and } & B_{ijkl} & = & B_{jikl} & = & B_{ijlk} & = & B_{jilk} \, .
  \end{array}
\]

A semidefinite game is called \emph{zero-sum} if $p_B(X,Y) = -p_A(X,Y)$ for all $X \in \mathcal{X}$
and $Y \in \mathcal{Y}$ and it can be described by the payoff function to the first
player.
A zero-sum semidefinite game on $\sym_d \times \sym_d$ for $d\in \N$ with payoff function $p$
for the first player is called \textit{symmetric} if $p(X,Y)=-p(Y,X)$ holds for all 
strategies $X,Y$. In that case, the value of the zero-sum semidefinite game is zero 
\cite[Lemma~4.3]{semi-games} and in particular, $p(X,X) =0$ for all $X \in \mathcal{X}$.

By \cite[Theorem 4.1]{semi-games}, the set of optimal strategies of
both players in a zero-sum semidefinite games can be expressed 
as the solutions of a semidefinite program.

\section{The reduction of semidefinite programs to semidefinite games}\label{se:reduction}

Throughout the paper, we use the following setup. Given
$A_1, \ldots, A_m, C \in \sym_n$ and $b\in \R^m$,
denote by $\primal$ the semidefinite program in the normal form
\begin{eqnarray} \label{eq:SDP_primal} 
&& \primal \; : \; \inf_{X} \{ \langle C , X \rangle : \, \langle A_i, X\rangle \ge b_i, \; i \in [m], \, X \in \sym_n^+  \}
\end{eqnarray}
and by $\dual$ its dual
\begin{eqnarray}
  \dual \; : \;
  \sup_{y} \{ b^Ty : \, \sum_{i=1}^m y_i A_i \preceq  C, \, y\in \R^m_+   \} \label{eq:SDP_dual} \, .
\end{eqnarray}

We explain our model and the assumptions which we make.
In line with the earlier work on optimization and game theory
mentioned in the introduction, our goal is to construct from 
one primal-dual pair $\mathcal{P}$ and $\mathcal{D}$ 
a zero-sum semidefinite game aiming at the following properties.
If both $\mathcal{P}$ and $\mathcal{D}$ are feasible and bounded, 
then the set 
of optimal strategies coincides with the set of optimal points of 
the pair. If $\mathcal{P}$ or $\mathcal{D}$ is
unbounded then the set of optimal strategies 
provides a certificate for the unboundedness. 

In view of the possible degeneracies of semidefinite programming
duality, we make the following assumptions. First, consider
the situation of a primal-dual pair $\mathcal{P}$ and $\mathcal{D}$
which are both feasible and bounded. Since in a zero-sum
semidefinite game, the optimal strategies are always attained,
we make the assumption that in $\mathcal{P}$ and $\mathcal{D}$,
the optimal points are attained. Moreover, since the game
is intended to capture both the primal and the dual, we
make the assumption that the primal and the dual optimal
value coincide. Indeed, in our main Theorem~\ref{th:reduction}
below
we will recognize from the value of the game whether both
conditions on the pair of SDPs are satisfied. If 
$\mathcal{P}$ and $\mathcal{D}$ satisfy Slater's condition
then our two assumptions are satisfied.

In the situation  of a primal-dual pair $\mathcal{P}$ and
$\mathcal{D}$ where one of them, say, $\mathcal{P}$ is unbounded,
we make a similar assumption. We then assume that there
exists a \emph{strictly} unbounded direction defined
as follows. 

\begin{definition} \label{def:strict-unbounded-direction}
Given a pair of SDPs $(\primal, \dual)$, 
we call a matrix $W\in \sym^{+}_n$ a \emph{strictly unbounded direction
for} $\primal$ (or \emph{strictly primal unbounded direction}) if
\begin{equation} 
\langle A_i,W\rangle > 0 \text{ for } i \in [m] \text{ and } \langle C,W\rangle <0 .
\end{equation}

We call $y\in \R^m_+$ a \emph{strictly unbounded direction for} 
$\dual$
(or \textit{strictly dual unbounded direction}) if
\begin{equation} 
\sum_{i=1}^m y_i A_i \prec 0 \text{ and } b^Ty>0 .
\end{equation}
\end{definition}

\begin{rem}
Similar to the notion of a strictly unbounded direction one
could also define the \emph{strict infeasibility} of an
SDP. While both $\mathcal{P}$ and $\mathcal{D}$ can be
infeasible, note that it is impossible that both are simultaneously
strictly infeasible.
\end{rem}

A strictly unbounded direction for $\primal$ or $\dual$ ensures that that 
program is unbounded and, in particular, feasible.
By the weak duality theorem,
a pair $(X,y) \in \sym_n^+ \times \R_+^m$ is strongly optimal for $\primal$ and $\dual$
if and only if
\[
  \langle A_i, X\rangle \ge b_i \text{ for } i \in [m], \; \, 
  \sum_{i=1}^m y_i A_i \preceq  C 
  \, \text{ and } \, 
  \langle C,X \rangle - b^T y \le 0.
\]
In order to come up with a feasible SDP even if $\primal$ or $\dual$ is not feasible, we consider an auxiliary SDP 
in which all the inequalities are relaxed. 

\begin{definition} \label{def:aux-problem}
Given a pair $(\primal, \dual)$, 
the \emph{primal auxiliary SDP} $\primal_{aux}$ is defined as
\begin{equation} \label{eq:w_defi_SDP_primal}
\begin{split}
\primal_{\aux} \, : \; \inf &\quad w \\
\text{s.t.} &\quad \langle A_i, X\rangle + w \ge b_i \quad \text{for } i \in [m],\\
&\quad \sum_{i=1}^m  y_i A_i - w I_n \preceq C,  \\
&\quad \langle C,X \rangle - b^Ty -w \le 0, \\
&\quad X \succeq 0, y \ge0, w\ge 0  .
\end{split}
\end{equation}
Its dual, called the \emph{dual auxiliary SDP} $D_{\aux}$, is
\begin{equation}
\label{eq:w_defi_SDP_dual}
\begin{split}
\dual_{\aux} \, : \; \sup &\quad b^Tz - \langle C,W\rangle\\
\text{s.t.} &\quad \sum_{i=1}^m  z_i A_i - rC \preceq 0, \\
&\quad \langle A_i, W\rangle - rb_i \ge 0 \quad \text{for } i \in [m],\\
&\quad \ones^T z + \tr(W) + r \le 1,\\
&\quad W \succeq 0, z \ge0, r\ge 0.
\end{split}
\end{equation}
\end{definition}

Note that the infimum in $\primal_{\aux}$ is attained with optimal value zero
if and only if there exists a pair $(X,y)$ of strongly optimal solutions 
to $\primal$ and $\dual$.

\begin{rem} \label{rem:strictly-dual}
The auxiliary program $\primal_{\aux}$ in Definition \ref{def:aux-problem} always has interior points. 
This can be seen by choosing $w$ sufficiently large. Hence, the supremum of the dual program $\dual_{\aux}$ is 
always attained. The infimum in the primal $\primal_{\aux}$ might not be attained.
\end{rem}

\begin{lemma}
\label{le:unbounded-direction}
If $\primal$ or $\dual$ has a strictly unbounded direction,
then the infimum in $\primal_{\aux}$ is attained.
\end{lemma}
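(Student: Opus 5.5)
The plan is to treat the two cases separately. In each, the strictly unbounded direction lets us satisfy two of the three constraint groups of $\primal_{\aux}$ at no cost in $w$. This reduces $\primal_{\aux}$ to minimizing a convex, continuous, coercive function over a closed cone, where a minimizer exists by compactness of sublevel sets.

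\emph{Case 1: $W_0$ is a strictly unbounded direction for $\primal$.} Since $\langle A_i,W_0\rangle>0$, the matrix $X_0=tW_0$ satisfies $\langle A_i,X_0\rangle\ge b_i$ for all $i$ once $t$ is large. Moving further along $W_0$, i.e.\ taking $X=X_0+sW_0$ with $s\to\infty$, keeps these constraints and drives $\langle C,X\rangle\to-\infty$. Hence for every fixed $y\ge 0$ and $w\ge 0$ we can choose $X$ so that the first and third constraint groups hold. The optimal value of $\primal_{\aux}$ is therefore $\inf_{y\ge 0}\varphi(y)$, where
\[
\varphi(y)=\max\Bigl(0,\lambda_{\max}\bigl(\textstyle\sum_i y_iA_i-C\bigr)\Bigr).
\]
The function $\varphi$ is convex and continuous. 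For coercivity, put $S=\sum_i y_iA_i-C$ and use $\lambda_{\max}(S)\tr(W_0)\ge\langle S,W_0\rangle$, which holds because $W_0\succeq 0$. This gives
\[
\varphi(y)\ \ge\ \frac{\alpha\,\ones^Ty-\langle C,W_0\rangle}{\tr(W_0)},\qquad \alpha=\min_i\langle A_i,W_0\rangle>0 .
\]
Here $\tr(W_0)>0$, since $W_0\neq 0$. So the sublevel sets of $\varphi$ on $\R^m_+$ are compact, and a minimizer $y^*$ exists. Set $w^*=\varphi(y^*)$ and pick $X$ as above. Then $(X,y^*,w^*)$ is feasible for $\primal_{\aux}$ and attains the infimum.

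\emph{Case 2: $y_0$ is a strictly unbounded direction for $\dual$.} The argument is symmetric. Since $\sum_i (y_0)_iA_i\prec 0$, the point $y=ty_0$ satisfies $\sum_i y_iA_i\preceq C$ for large $t$. Increasing $t$ further drives $b^Ty\to\infty$. Hence the second and third constraint groups hold with any $w\ge 0$ and any $X$, for $t$ large depending on $X$. The value of $\primal_{\aux}$ becomes $\inf_{X\succeq 0}\psi(X)$, where
\[
\psi(X)=\max\bigl(0,\max_i(b_i-\langle A_i,X\rangle)\bigr).
\]
Note $y_0\ne 0$ because $b^Ty_0>0$. Let $\mu=-\lambda_{\max}(\sum_i(y_0)_iA_i)>0$. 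Weighting the terms $b_i-\langle A_i,X\rangle$ by $(y_0)_i/\ones^Ty_0$ gives
\[
\psi(X)\ \ge\ \frac{b^Ty_0+\mu\,\tr(X)}{\ones^Ty_0}.
\]
Since $\tr(X)$ controls the norm of $X\succeq 0$, the sublevel sets of $\psi$ on $\sym_n^+$ are compact. A minimizer $X^*$ therefore exists, and $(X^*,\,ty_0,\,\psi(X^*))$ with $t$ large attains the infimum.

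The step needing the most care is the reduction itself. One must check that, for the chosen $(y,w)$ in Case 1, respectively $(X,w)$ in Case 2, the remaining variable can be pushed along the unbounded direction without breaking the other constraints. One must also check that the third constraint, which couples $X$ and $y$, is indeed satisfied for every $w\ge 0$. After that, coercivity is the short trace estimate above, and attainment follows by compactness.
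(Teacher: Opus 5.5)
Your proof is correct. Its skeleton matches the paper's. In each case you drop the constraint groups that the strictly unbounded direction satisfies for free: your $\varphi$ is exactly the value function of the paper's relaxation \eqref{eq:relaxed-aux-problem-primal}, and $\psi$ is that of its Case~2 analogue. You then show the relaxation attains its infimum and lift a minimizer back to $\primal_{\aux}$ by moving far along the direction.

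Where you differ is the attainment step. The paper obtains it from conic strong duality. The relaxed primal has interior points and the scaled direction makes the relaxed dual strictly feasible, so both optima are attained. The optimal $\bar W$ then gives a feasible point $(\bar W,0,0)$ of $\dual_{\aux}$ with matching value, which certifies optimality. You instead use Weierstrass with explicit coercivity bounds, $\varphi(y)\ge(\alpha\,\ones^Ty-\langle C,W_0\rangle)/\tr(W_0)$ and $\psi(X)\ge(b^Ty_0+\mu\,\tr(X))/\ones^Ty_0$. These are just the weak-duality estimates supplied by $W_0$ and $y_0$, and optimality then follows from the value-function reduction rather than from a dual solution.

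Your route is more elementary and self-contained. It needs no duality theorem and no genuinely interior (positive definite) point of the relaxed dual, which the paper's $\bar W$ need not be. It is also tidier in the lifting step. You scale the original strict direction $W_0$ (resp.\ $y_0$), which guarantees $\langle A_i,X\rangle+w^*\ge b_i$. The paper lifts with $\lambda\bar W$ for an optimal relaxed-dual solution, which only satisfies $\langle A_i,\bar W\rangle\ge 0$, so that constraint group is not automatically met. In exchange, the paper's route produces an optimal solution of $\dual_{\aux}$ as a by-product, which the lemma itself does not require.
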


\begin{proof}
Let $\bar{W}$ be a strictly primal unbounded direction of $\primal$. That is, 
$\bar{W} \in \sym_n^+$ with 
$\langle A_i, \bar{W} \rangle > 0$ for $i \in [m]$
and $\langle C, \bar{W} \rangle < 0$. By suitable scaling,
we can assume $\tr(\bar{W}) \le 1$.
Now consider the relaxation of the primal auxiliary
SDP $\primal_{\aux}$ defined by

\begin{equation}
\label{eq:relaxed-aux-problem-primal}
\inf \left\{ w \, : \, \sum\limits_{i=1}^m  y_i A_i - w I_n \preceq C, \,
y \ge 0, \, w \ge 0\right\} \, .
\end{equation}
Its dual is
\begin{equation}
  \label{eq:relaxed-aux-problem-dual}
  \sup \left\{ - \langle C, W \rangle \, : \,
  \langle A_i, W\rangle \ge 0 \text{ for } i \in [m], \,
  \tr(W) \le 1, \,
  W \succeq 0
\right\}.
\end{equation}

The primal relaxation~\eqref{eq:relaxed-aux-problem-primal}
still has interior points. Moreover, $\bar{W}$ is
an interior point
of the dual~\eqref{eq:relaxed-aux-problem-dual} 
with positive objective value.
Hence, there exists a pair of optimal solutions 
$(\bar{y},\bar{w})$ to~\eqref{eq:relaxed-aux-problem-primal}
and $\bar{W}$ to~\eqref{eq:relaxed-aux-problem-dual} 
such that $\bar{w}=-\langle C, \bar{W} \rangle>0$.

We carry over this pair of optimal solutions to a pair
of optimal solutions for $\primal_{\aux}$ and $\dual_{\aux}$. To this end,
set $\bar{z}:=0\in \R^m$, $\bar{r}:=0$ and $\bar{X}:=\lambda \bar{W}$ 
for some sufficiently large $\lambda >0$ such that 
$\langle C,\bar{X}\rangle - b^T \bar{y} - \bar{w} <0$.
Then the point $(\bar{X},\bar{y},\bar{w})$
is feasible for $\primal_{\aux}$ and the point
$(\bar{W},\bar{z},\bar{r})$ is feasible for $\dual_{\aux}$.

Further, the objective values of $(\bar{X},\bar{y},\bar{w})$
in~$\primal_{\aux}$ and of $(\bar{W},\bar{z},\bar{r})$ in $\dual_{\aux}$
coincide, so that both the supremum in $\dual_{\aux}$ is attained
(which was already known by Remark~\ref{rem:strictly-dual})
and the infimum in $\primal_{\aux}$ is attained.

If there exists a strictly dual unbounded direction $y$ 
of $\dual$, we can consider 
instead the relaxation of the auxiliary 
primal $\primal_{\aux}$, defined by
$
\label{internal:eq:relaxed-aux-problem}
\inf \{ w \, : \, 
\langle A_i, X\rangle + w \ge b_i \text{ for } i\in [m], \,
X \succeq 0, \, w\ge 0 \} \, .
$
Its dual is
$
\sup \{ b^T z \, : \,
 \sum_{i=1}^m z_i A_i \preceq 0, \,
 \ones^T z \le 1, \,
 z\ge 0\}.
%\end{split}
$
By a similar argument to the previous case, both this primal
and its dual have interior points and the resulting optimal pair
of solutions carry over to $\primal_{\aux}$ and $\dual_{\aux}$,
which shows the claim.
\end{proof}

Building on \cite{semi-games} and \cite{vonstengel}, 
we now introduce a reduction of the SDP~\eqref{eq:SDP_primal} to a modified semidefinite
Dantzig game $G_M$ (for an appropriate constant $M$), 
which we define in the following.
We make use of an upper bound $M$ for the size of an optimal solution of
the primal auxiliary problem $\primal_{\aux}$ if such a solution exists.
If the infimum of $\primal_{\aux}$ is attained, then let $M>0$ such that 
\begin{equation} \label{eq:def:M}
\tr(X^*) + \ones^T y^* +1 \le M
\end{equation}
holds for some optimal solution $(X^*,y^*,w^*)$. Note that this condition 
does not need to hold for every optimal solution or even for every
feasible solution.
If the infimum of $\primal_{\aux}$ 
is not attained, we can arbitrarily choose $M>0$. Any constant $M$
satisfying the desired property is called a \emph{solution bound}
for the pair of SDPs $(\primal,\dual)$. 
Section~\ref{se:M} shows how to obtain such a solution bound explicitly for
rational input data by bounding isolated primal/dual solutions of a KKT system.

Given a pair $(\primal,\dual)$ with a solution bound $M$,
we define the zero-sum semidefinite game $G_M$ via the payoff tensor $Q_M$ on $\sym_{n+m+2}\times \sym_{n+m+1}$.
$Q_M$ exhibits a block structure which can be described as follows. 
The strategies $\bar{X}$ and $\bar{Y}$ can be viewed as diagonal block matrices 
of the form
\begin{equation} \label{eq:strategies}
\bar{X} = \left(
    \begin{array}{c|c|c|c}
      X^{(1)} & 0 & 0 & 0\\
      \hline
      0 & \diag(y^{(1)}) & 0 &0\\
      \hline
      0 & 0 & t^{(1)}&0\\
      \hline
      0 & 0 & 0 & u
    \end{array}
    \right), \quad \bar{Y}=\left(
    \begin{array}{c|c|c}
      X^{(2)} & 0 & 0\\
      \hline
      0 & \diag(y^{(2)}) & 0 \\
      \hline
      0 & 0 & t^{(2)}
    \end{array}
    \right)
\end{equation}
with $y^{(1)},y^{(2)}\in \R^m_+, \, X^{(1)},X^{(2)}\in \sym_n^+, t^{(1)},t^{(2)}\ge 0, u\ge 0$ and $\tr(\bar{X})=\tr(\bar{Y})=1$. In order to achieve this block structure on 
$\bar{X}$, every entry of $Q_M$ whose first index corresponds to an entry outside one of the
non-zero blocks in $\bar{X}$ or whose second index corresponds to an entry outside one
of the non-zero blocks in $\bar{Y}$ is zero. This induces a block structure on $Q_M$ and thus on $\bar{X}$.
The payoff tensor $Q_M$ can be conveniently defined in terms of the bilinear payoff function
on the block strategy matrices. The payoffs of the first player in 
the zero-sum semidefinite game $G_M$ are given by the bilinear
function
\begin{equation} \label{eq:payoff_G_m}
\begin{array}{rcl}
p(\bar{X},\bar{Y}) & = & \sum\limits_{i=1}^m y_i^{(1)} (t^{(2)} b_i -\langle A_i , X^{(2)} \rangle ) + \langle X^{(1)}, \sum\limits_{i=1}^m y_i^{(2)}A_i -  t^{(2)}C \rangle \\
& & + \ t^{(1)}(\langle C, X^{(2)}\rangle - b^Ty^{(2)})
 + u \left( \tr(X^{(2)})+ \ones^T y^{(2)} - t^{(2)}M \right) \, .
\end{array}
\end{equation}

Note that except for the addition of the last row and column to the strategy
matrix of the first player, this is the semidefinite Dantzig game defined 
on $\sym_{n+m+1}\times \sym_{n+m+1}$ as introduced in \cite{semi-games}.
The construction~\eqref{eq:payoff_G_m} also generalizes the modified Dantzig 
game~\eqref{eq:pm} from the LP setting.

We denote the subgame of the semidefinite game 
$G_M$ on $\sym_{n+m+1}\times \sym_{n+m+1}$ by $G$.
$G$ is symmetric and hence its value is zero.
The value of $G_M$ is nonnegative because the first player can guarantee a nonnegative payoff by choosing 
$u=0$ in~\eqref{eq:strategies}
and by just playing an optimal strategy of the game $G$. We can now prove our first
main result.

\begin{thm}[Equivalence theorem, bounded case] \label{th:reduction}
Consider a pair $(\primal,\dual)$ and its modified Dantzig game $G_M$.

A pair $(X,y)$ of strongly optimal solutions of $(\primal,\dual)$ exists 
if and only if the value of the game $G_M$ is zero.

Furthermore, if the value of $G_M$ is zero, then any optimal strategy of the
second player for $G_M$ will be of the form $\diag(X,y,t)$ with $t>0$ and $\frac{1}{t}X$, $\frac{1}{t}y$ are strongly optimal solutions for $(\primal,\dual)$.
Here, $\diag(X,y,t)$ denotes the
block matrix with the diagonal blocks
$X$, $\diag y$ and $t$.
\end{thm}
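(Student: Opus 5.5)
The plan is to prove the two implications separately, using only the bilinear payoff \eqref{eq:payoff_G_m}, the definition \eqref{eq:def:M} of the solution bound, and the fact (noted just before the theorem) that the value of $G_M$ is always nonnegative. The second half of the statement, about the form of every optimal strategy of the second player, will come out of the proof of the implication ``value zero $\Rightarrow$ strongly optimal pair exists''.

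\emph{Existence of a strongly optimal pair implies value zero.} Suppose a strongly optimal pair exists. Then $\primal_{\aux}$ attains its infimum with value $w^*=0$, so by \eqref{eq:def:M} there is an optimal solution $(X^*,y^*,0)$ of $\primal_{\aux}$ with $\tr(X^*)+\ones^Ty^*+1\le M$. Here I must use the specific optimal solution furnished by the definition of $M$, not an arbitrary strongly optimal pair; this bookkeeping is the only delicate point. Since $w^*=0$, the pair $(X^*,y^*)$ is itself strongly optimal: it is feasible for $\primal$ and $\dual$ and satisfies $\langle C,X^*\rangle\le b^Ty^*$.

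Set $t^*=1/(\tr(X^*)+\ones^Ty^*+1)$ and let the second player play $\bar Y=\diag(t^*X^*,t^*y^*,t^*)$, which has trace one. Plug $\bar Y$ into \eqref{eq:payoff_G_m} against an arbitrary $\bar X$ and check the four terms.
\begin{itemize}
\item $y^{(1)}_i\,t^*(b_i-\langle A_i,X^*\rangle)\le 0$ by primal feasibility.
\item $\langle X^{(1)},t^*(\sum_i y^*_iA_i-C)\rangle\le 0$, since $X^{(1)}\succeq 0$ and $\sum_i y^*_iA_i-C\preceq 0$.
\item $t^{(1)}t^*(\langle C,X^*\rangle-b^Ty^*)\le 0$ by strong optimality.
\item $u\,t^*(\tr X^*+\ones^Ty^*-M)\le 0$ by the choice of $M$.
\end{itemize}
So the second player guarantees payoff at most $0$. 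Together with nonnegativity of the value, the value of $G_M$ is zero.

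\emph{Value zero implies existence, and the form of optimal strategies.} Assume the value is zero and let $\bar Y$ be any optimal strategy of the second player. The block structure of $Q_M$ lets me write it as $\diag(X,y,t)$. Optimality means $p(\bar X,\bar Y)\le 0$ for every first-player strategy. Testing against suitable pure strategies gives the following.
\begin{itemize}
\item A unit vector in the $y^{(1)}$-block gives $\langle A_i,X\rangle\ge t b_i$ for all $i$.
\item Rank-one matrices $vv^T$ in the $X^{(1)}$-block give $\sum_i y_iA_i\preceq tC$.
\item $t^{(1)}=1$ gives $\langle C,X\rangle\le b^Ty$.
\item $u=1$ gives $\tr X+\ones^Ty\le tM$.
\end{itemize}
The last inequality yields $t>0$. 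Indeed, if $t=0$ then $\tr X+\ones^Ty\le 0$, which forces $X=0$ and $y=0$ since $X\succeq 0$ and $y\ge 0$, contradicting $\tr\bar Y=1$.

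Dividing by $t$, the first two inequalities say that $\frac1t X$ and $\frac1t y$ are feasible for $\primal$ and $\dual$. The third gives $\langle C,\frac1tX\rangle\le b^T\frac1ty$. Weak duality forces equality, so $(\frac1tX,\frac1ty)$ is a strongly optimal pair. This proves existence and the structural claim at the same time.

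I expect no serious obstacle. The main care is the correct use of $M$ in the first direction: I must take the optimal $(X^*,y^*,w^*)$ guaranteed by \eqref{eq:def:M} and observe that $w^*=0$ there because the optimal value of $\primal_{\aux}$ is zero. I also need the positivity $t>0$ derived from the $u$-row in the second direction; this positivity is exactly what distinguishes $G_M$ from the unmodified semidefinite Dantzig game $G$.
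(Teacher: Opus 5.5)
Your proposal is correct and follows the paper's proof essentially step for step. In the forward direction you scale the optimal solution $(X^*,y^*,0)$ of $\primal_{\aux}$ that $M$ dominates into a trace-one strategy that holds the first player to payoff at most $0$. In the converse you test an optimal second-player strategy against pure first-player responses, derive the three feasibility/optimality inequalities, and use the $u$-row to force $t>0$.
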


\begin{example}
\label{ex:limitation2}
A simple example where the reduction 
in~\cite{semi-games} failed and which
is covered by our modified Dantzig construction is the semidefinite version
of Example~\ref{ex:limitation1}. That is,
all payoffs corresponding to non-diagonal
strategy elements are set to zero.
\end{example}

\begin{proof}[Proof of Theorem~\ref{th:reduction}]
First assume that a strongly optimal solution exists. By definition of
the solution bound $M$, there exists an optimal solution $(X',y',0)$ of the auxiliary 
SDP $\primal_{\aux}$ that satisfies \eqref{eq:def:M}.
Let $t':= \frac{1}{\tr(X')+ \ones^T y' +1}$ and consider the strategy 
\[
  Y' \ = \ \left(
    \begin{array}{c|c|c}
      t'X' & 0 & 0\\
      \hline
      0 & t'\diag(y') & 0 \\
      \hline
      0 & 0 & t'
    \end{array}
    \right)
\]
of the second player.
For any strategy $\bar{X}$ of the first player, the payoff is
\begin{eqnarray*}
p(\bar{X},Y') & = & \sum\nolimits_{i=1}^m \underbrace{y_i^{(1)}t'}_{\ge 0} \underbrace{(b_i -\langle A_i , X' \rangle )}_{\le 0} \ + \ \langle \underbrace{X^{(1)}}_{\succeq 0}, \underbrace{t'\sum\nolimits_{i=1}^m (y_i'A_i - C)}_{\preceq 0} \rangle \\
& & + \underbrace{t^{(1)} \,t'}_{\ge 0} \underbrace{(\langle C, X'\rangle - b^Ty')}_{\le 0}
 \ + \ \underbrace{u \, t'}_{\ge 0} \underbrace{(\tr(X')+ \ones^T y' - M)}_{\le 0} \ \le \ 0 \, ,
\end{eqnarray*}
where we used that $(X',y',0)$ is an optimal solution of $\primal_{\aux}$.
Hence, the strategy $Y'$ of the second player ensures a nonnegative payoff 
for her, which implies that the game value $v$ satisfies $v \le 0$.
Since $v \ge 0$ by the initial remarks before the theorem, we conclude $v=0$.

Conversely, assume that the game $G_M$ has value $v=0$.
Then an optimal strategy $\bar{Y}$ of the second player exists such that the payoff
$p(\bar{X},\bar{Y})$ of the first player is nonpositive for any strategy $\bar{X}$.
Hence $t^{(2)}> 0$, because otherwise player 1 could set $u=1$ in her 
strategy and receive the payoff $\tr(X^{(2)})+ \ones^Ty^{(2)} = 1 >0$ in
contraction to $v=0$.
Similar arguments on the possible responses of the first player show 
that the following inequalities also hold.
\begin{eqnarray}\label{eq:feasible_X}
(t^{(2)} b_i -\langle A_i , X^{(2)} \rangle ) &\le & 0 \quad \text{for } i\in [m], \\\label{eq:feasible_y}
 \sum_{i=1}^m y_i^{(2)}A_i -  t^{(2)}C &\preceq & 0, \\
\label{eq:optimality}
\langle C, X^{(2)}\rangle - b^Ty^{(2)} &\le & 0.
\end{eqnarray}
We claim that $\frac{1}{t^{(2)}}X^{(2)}$ and $\frac{1}{t^{(2)}}y^{(2)}$ 
form a pair of strongly optimal solutions of $(\primal,\dual)$. Indeed, the inequalities \eqref{eq:feasible_X} and \eqref{eq:feasible_y} ensure that $\frac{1}{t^{(2)}}X^{(2)}$ and $\frac{1}{t^{(2)}}y^{(2)}$ are feasible solutions and \eqref{eq:optimality} ensures that strong duality holds and that the strategies are optimal. 
\end{proof}

\begin{rem}
\label{re:paux-daux}
If the pair $(\primal,\dual)$ has an optimal solution, then the preceding
proof employs an optimal solution $(X',y',0)$ of $\primal_{\aux}$
satisfying~\eqref{eq:def:M}. Setting $W=X'$, $z=y'$ and $r=1$ shows that
a pair of strongly optimal solutions exist 
for $\primal_{\aux}$ and $\dual_{\aux}$.
\end{rem}

The following example illustrates the reduction.

\begin{example} \label{ex:bounded}
Consider the pair of SDPs $(\primal, \dual)$ with
\[
\begin{array}{rcl}
\primal & : & \inf  \left\{ \left\langle \begin{pmatrix}
1 & 2\\
2 & 2
\end{pmatrix}, X\right\rangle \, : \, \left\langle \begin{pmatrix}
1 & 1\\
1 & 0
\end{pmatrix}, X\right\rangle \ge 1, \, X\succeq 0 \right\} ,
\\
\dual & : & \sup  \left\{ y \, : \, \begin{pmatrix}
1-y & 2-y\\
2-y & 2
\end{pmatrix} \succeq 0, \, y\ge 0 \right\} .
\end{array}
\]
The infimum of the auxiliary SDP 
\begin{equation*} 
\begin{array}{rcl}
    \primal_{aux}\,: & \inf & \left \{ w \, : \, 
     \left\langle 
     \begin{pmatrix}
1 & 1\\
1 & 0
\end{pmatrix}, X\right\rangle  +  w \ge 1, \;
y \begin{pmatrix}
1 & 1\\
1 & 0
\end{pmatrix} - w I_2 \preceq \begin{pmatrix}
1 & 2\\
2 & 2
\end{pmatrix}, \right. \; \\
& & 
\left.
\left\langle \begin{pmatrix}
1 & 2\\
2 & 2
\end{pmatrix},X \right\rangle - y -w  \le  0, \;
X \succeq 0, y \ge0, w\ge 0 \right\} 
\end{array}
\end{equation*}
is attained for $(X^*,y^*,w^*)= \left( 
  \left( \begin{smallmatrix}
    1 & 0\\
    0 & 0
\end{smallmatrix} \right),1,0\right)$, so we can choose, say, $M=3$, as a solution bound.
The modified Dantzig game $G_M$ is now defined by the payoff
\begin{eqnarray*}
p(\bar{X},\bar{Y}) & = &  y^{(1)} \left(t^{(2)} -\left\langle \begin{pmatrix}
1 & 1\\
1 & 0
\end{pmatrix} , X^{(2)} \right\rangle \right) + \left\langle X^{(1)}, y^{(2)}\begin{pmatrix}
1 & 1\\
1 & 0
\end{pmatrix} -  t^{(2)}\begin{pmatrix}
1 & 2\\
2 & 2
\end{pmatrix} \right\rangle \\
& & + t^{(1)} \left( \left\langle \begin{pmatrix}
1 & 2\\
2 & 2
\end{pmatrix}, X^{(2)}\right\rangle - y^{(2)} \right)
 + u \left(\tr(X^{(2)})+ y^{(2)} - 3t^{(2)}\right) .
\end{eqnarray*}
An optimal strategy for the second player is $\diag(X^{(2)},y^{(2)},t^{(2)})$ where $ X^{(2)} =\frac{1}{3}
\left( \begin{smallmatrix}
    1 & 0\\
    0 & 0
\end{smallmatrix} \right)$, 
$y^{(2)}=\frac{1}{3}$ and $t^{(2)}=\frac{1}3$.
From this we can recover the pair of strongly optimal solutions 
$(X,y)= \left( \left( \begin{smallmatrix}
    1 & 0\\
    0 & 0
\end{smallmatrix} \right),1\right)$ for $(\primal,\dual)$.
\end{example}

\begin{thm}[Equivalence theorem, unbounded case] \label{th:reduction-unbounded}
Consider a pair $(\primal,\dual)$ and its modified Dantzig game $G_M$. Assume that a strictly unbounded direction exists in either $\primal$ or $\dual$.

Then, for every optimal strategy $\diag (X',y',t',u) $ for player 1 in $G_M$,
we have $t'=0$ and $u=v$.

Furthermore, $X'$ is an unbounded
direction for $\primal$ or
$y'$ is an unbounded direction 
for $\dual$, 
which is a 
certificate that $\dual$ or $\primal$ 
is infeasible.
\end{thm}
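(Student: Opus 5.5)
The plan is to follow von Stengel's LP argument. We read off linear inequalities from player~1's optimality by testing her strategy against pure-type strategies of player~2, and then use the strictly unbounded direction together with Theorem~\ref{th:reduction} to pin down $t'$ and $u$.

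\emph{Step 1: the value is positive.} A strictly unbounded direction for $\primal$ (resp.\ $\dual$) makes that program unbounded, so by weak duality the other program is infeasible. Hence no pair of strongly optimal solutions exists, and Theorem~\ref{th:reduction} gives $v>0$.

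\emph{Step 2: inequalities from optimality.} Let $\diag(X',y',t',u)$ be optimal for player~1. Then $p(\cdot,\bar Y)\ge v$ for every strategy $\bar Y$ of player~2.
\begin{itemize}
\item Taking $\bar Y$ supported on the $X^{(2)}$-block, with $X^{(2)}$ an arbitrary density matrix, gives $-\sum_i y'_iA_i+t'C+uI\succeq vI$.
\item Taking $y^{(2)}=e_i$ gives $\langle A_i,X'\rangle-t'b_i+u\ge v$ for each $i\in[m]$.
\item Taking $t^{(2)}=1$ gives $b^Ty'-\langle C,X'\rangle-uM\ge v$.
\end{itemize}

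\emph{Step 3: $u\ge v$.} Pair the first matrix inequality with a strictly primal unbounded direction $W$. If instead $\dual$ has a strictly unbounded direction $\bar y$, pair the $m$ scalar inequalities with $\bar y$. In the primal case we obtain
\[
u\,\tr(W)\ \ge\ v\,\tr(W)+\sum_i y'_i\langle A_i,W\rangle-t'\langle C,W\rangle\ \ge\ v\,\tr(W).
\]
The middle inequality is strict as soon as $t'>0$ or $y'\neq0$. The dual case is analogous.

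\emph{Step 4: $u\le v$ (main obstacle).} I would use an optimal strategy $\bar Y=\diag(X^{(2)},y^{(2)},t^{(2)})$ of player~2 and complementary slackness in $G_M$.
\begin{itemize}
\item First show $v<1$. The strategy $Y^*$ built from the optimal $(X^*,y^*,w^*)$ of $\primal_{\aux}$ yields, exactly as in the proof of Theorem~\ref{th:reduction}, the bound $v\le s\,w^*$ with $s=1/(\tr X^*+\ones^Ty^*+1)$. The infimum of $\primal_{\aux}$ is attained by Lemma~\ref{le:unbounded-direction}, and the solution bound $M$ controls this quantity.
\item Next show $t^{(2)}>0$. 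If $t^{(2)}=0$, then the $u$-row pays $\tr X^{(2)}+\ones^Ty^{(2)}=1>v$ against $\bar Y$, which contradicts optimality of $\bar Y$.
\item With $t^{(2)}>0$, complementary slackness makes the $u$-row a best response of the form $\tr X^{(2)}+\ones^Ty^{(2)}-t^{(2)}M$. I then combine this with the $\primal_{\aux}$-feasible point $(X^{(2)}/t^{(2)},y^{(2)}/t^{(2)},v/t^{(2)})$, obtained by dividing player~2's best-response inequalities by $t^{(2)}$. This should force the inequality from Step~3 to be tight, i.e.\ $u=v$.
\end{itemize}
This is the delicate step, since $w^*$ may be attained only through Lemma~\ref{le:unbounded-direction}. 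If the direct argument stalls, I would try to prove the relation $w^*=v(M+1)/(1-v)$ as in the LP case.

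\emph{Step 5: $t'=0$.} Once $u=v$, suppose $t'>0$. Divide the Step~2 inequalities by $t'$. With $u-v=0$ they state that $(X'/t',\,y'/t',\,0)$ is feasible for $\primal_{\aux}$; the third inequality even holds strictly, since its right-hand side is $-(uM+v)/t'<0$. So $\primal_{\aux}$ attains the value $0$, and a strongly optimal pair exists, contradicting Step~1. Hence $t'=0$.

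\emph{Step 6: certificates.} With $t'=0$ and $u=v$, Step~2 becomes
\[
\sum_i y'_iA_i\preceq 0,\qquad \langle A_i,X'\rangle\ge 0\ \ (i\in[m]),\qquad b^Ty'-\langle C,X'\rangle\ \ge\ v(M+1)\ >\ 0 .
\]
Therefore $\langle C,X'\rangle<0$, making $X'$ an unbounded direction for $\primal$, or $b^Ty'>0$, making $y'$ an unbounded direction for $\dual$. By the standard weak-duality/Farkas argument, such a direction certifies infeasibility of $\dual$, respectively $\primal$.
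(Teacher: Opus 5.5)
\paragraph{The gap is Step 4.} Your Steps 1, 2, 5 and 6 are sound. Step 3 is a valid alternative route to $u\ge v$: the paper derives it from the zero value of the symmetric subgame $G$, whereas you pair the optimality inequalities with the strictly unbounded direction. The genuine gap is Step 4. The inequality $u\le v$ is the heart of the theorem, and you leave it at ``this should force the inequality from Step 3 to be tight.''

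The combination you sketch provably cannot close it. Since $u\ge v>0$, the $u$-row is a best response to every optimal $\bar Y$, so $t^{(2)}=(1-v)/(1+M)$. For $(\hat X,\hat y,\hat w)=(X^{(2)},y^{(2)},v)/t^{(2)}$ one then computes $\tr\hat X+\ones^T\hat y-\hat w-M=0$ and $v(M+1)-\hat w(1-v)=0$. Pairing player~1's Step-2 inequalities with this point therefore gives no information on $u-v$. Player~2's own strategy is exactly balanced against the $M$-row, and something strictly smaller than $M$ is needed.

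\paragraph{The missing idea.} The solution bound must enter here. Take an optimal $(\bar X,\bar y,\bar w)$ of $\primal_{\aux}$ with $\tr\bar X+\ones^T\bar y+1\le M$; it exists by Lemma~\ref{le:unbounded-direction} and the definition of $M$. Use player~2's point only to obtain $\bar w\le v/t^{(2)}=v(M+1)/(1-v)$. Now add up four quantities:
\begin{itemize}
\item $\langle\bar X,\cdot\rangle$ applied to the matrix inequality;
\item $\bar y_i$ times the $i$-th scalar inequality;
\item the third inequality, written as $b^Ty'-\langle C,X'\rangle-(u-v)M\ge v(M+1)$;
\item $\bar w(v-u-t'-\tr X'-\ones^Ty')=\bar w(v-1)$, which is the trace identity.
\end{itemize}
Regrouping gives
\[
(u-v)\bigl(\tr\bar X+\ones^T\bar y-\bar w-M\bigr)+R\ \ge\ v(M+1)-\bar w(1-v)\ \ge\ 0,
\]
where
\[
R=t'\bigl(\langle C,\bar X\rangle-b^T\bar y-\bar w\bigr)+\Bigl\langle X',\sum_i\bar y_iA_i-\bar wI-C\Bigr\rangle+\sum_i y_i'\bigl(b_i-\langle A_i,\bar X\rangle-\bar w\bigr)\le 0 .
\]
The coefficient of $u-v\ge 0$ is at most $-1$, so $u\le v$ follows. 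Your fallback of first proving $\bar w=v(M+1)/(1-v)$ does not bypass this: in the paper, that identity is a by-product of this same chain.

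\paragraph{Smaller issues in Step 4.}
\begin{itemize}
\item The bound $v\le s\,w^*$ does not give $v<1$. In the paper's unbounded example, $(X^*,y^*,w^*)=(0,0,1)$, so $s\,w^*=1$. Instead, argue $u<1$: if $u=1$, player~2 answers with $t^{(2)}=1$, paying $-M<0<v$. Combined with $v\le u$ from Step 3, this gives $v<1$.
\item The $u$-row is a best response because $u>0$ in player~1's optimal strategy, not because $t^{(2)}>0$. You already have $u\ge v>0$ from Step 3, so this is available.
\end{itemize}
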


First, we illustrate the statement with an example.
\begin{example} 
Let $(\primal,\dual)$ be the pair of SDPs given by
\[
\begin{array}{rcl}
\primal & : & \inf \left\{ \left\langle \begin{pmatrix}
0 & -1\\
-1 & 0
\end{pmatrix}, X\right\rangle \, : \, \left\langle \begin{pmatrix}
-1 & 1\\
1 & 0
\end{pmatrix}, X\right\rangle \ge 1, \, X\succeq 0 \right\}, \\
\dual & : & \sup \left\{ y \, : \, \begin{pmatrix}
y & -1-y\\
-1-y & 0
\end{pmatrix}\succeq 0, \, y\ge 0 \right\} .
\end{array}
\]
$\primal$ is unbounded and $\dual$ is infeasible.
The infimum of the auxiliary program
\begin{equation*} 
\begin{array}{rcl}
    \primal_{aux}\,: & \inf & \left \{ w \, : \, 
     \left\langle 
     \begin{pmatrix}
-1 & 1\\
1 & 0
\end{pmatrix}, X\right\rangle  +  w \ge 1, \;
 \begin{pmatrix}
y & -1-y\\
-1-y & 0
\end{pmatrix} + w I_2 \succeq 0, \right. \; \\
& & 
\left.
\left\langle \begin{pmatrix}
0 & -1\\
-1 & 0
\end{pmatrix},X \right\rangle - y -w  \le  0, \;
X \succeq 0, y \ge0, w\ge 0 \right\} 
\end{array}
\end{equation*}
is one and an optimal solution is 
$(X^*,y^*,w^*)= ( \left( \begin{smallmatrix}
    0 & 0\\
    0 & 0
\end{smallmatrix} \right),0,1)$, so we can choose, say $M=1$, as a solution bound.
The modified Dantzig game $G_M$ is defined by the payoff
\begin{eqnarray*}
p(\bar{X},\bar{Y}) & = &  y^{(1)} \left(t^{(2)} -\left\langle \begin{pmatrix}
-1 & 1\\
1 & 0
\end{pmatrix} , X^{(2)} \right\rangle \right) + \left\langle X^{(1)}, y^{(2)}\begin{pmatrix}
-1 & 1\\
1 & 0
\end{pmatrix} -  t^{(2)}\begin{pmatrix}
0 & 1\\
1 & 0
\end{pmatrix} \right\rangle \\
& & + t^{(1)}\left( \left\langle \begin{pmatrix}
0 & 1\\
1 & 0
\end{pmatrix}, X^{(2)}\right\rangle - y^{(2)}\right)
 + u (\tr(X^{(2)})+ y^{(2)} - t^{(2)}).
\end{eqnarray*}
An optimal strategy for the second player in $G_M$ is 
$\diag(X^{(2)},y^{(2)},t^{(2)})$, where 
$X^{(2)} =\frac{1}{9}\left( \begin{smallmatrix}
    1 & \frac{3}{2}\\
    \frac{3}{2} & 5
\end{smallmatrix} \right),
y^{(2)}=0$ and $t^{(2)}=\frac{1}{3}$.
Further, $X^{(2)}$ is a strictly unbounded direction of $\primal$.

Note that the reduction from \cite{semi-games}, which does not use the solution bound, allows for the optimal strategy given by $X^{(2)}=\left( \begin{smallmatrix}
    0 & 0\\
    0 & 1
\end{smallmatrix} \right), \, y^{(2)}=0, \, t^{(2)}=0$  for player 2. This does not give us any information about the optimal solutions of the SDP since $t^{(2)}=0=\langle C,X^{(2)}\rangle - b^Ty^{(2)}$ with $C=\left( \begin{smallmatrix}
    0 & -1\\
    -1 & 0
\end{smallmatrix} \right)$ and $b=1$ which is a case that is explicitly not covered in \cite{semi-games}.
\end{example}

In order to prove Theorem \ref{th:reduction-unbounded},
we begin with some useful observations.

\begin{lemma} \label{le:payoff_bound}
Given a pair $(\primal,\dual)$, 
assume that its modified Dantzig game $G_M$ has value $v\ge 0$ and an optimal 
strategy $\bar{X} = \diag(X',y',t',u)$ for the first player. Then
\[
\label{eq:le:payoff_bound_v}
\begin{array}{rcl}
\langle X', A_i \rangle - t'b_i &\ge & v-u \quad \text{for } i \in [m], \\
 \displaystyle t'C - \sum_{i=1}^m y_i'A_i  &\succeq & (v-u)I, \\
b^T y' - \langle X', C \rangle &\ge & v+uM.
\end{array}
\]
\end{lemma}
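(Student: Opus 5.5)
Since $\bar X=\diag(X',y',t',u)$ is optimal for the first player and the game has value $v$, we have $p(\bar X,\bar Y)\ge v$ for every strategy $\bar Y$ of the second player. The plan is to regroup the payoff \eqref{eq:payoff_G_m} as a linear function of the second player's variables $(X^{(2)},y^{(2)},t^{(2)})$ and then test it against suitable pure-type strategies. Collecting terms gives
\[
p(\bar X,\bar Y)=\sum_{i=1}^m y^{(2)}_i\bigl(\langle A_i,X'\rangle - t'b_i + u\bigr) + \Bigl\langle X^{(2)}, \, t'C-\sum_{i=1}^m y'_iA_i + uI\Bigr\rangle + t^{(2)}\bigl(b^Ty'-\langle C,X'\rangle - uM\bigr).
\]
Here the coefficient of $y^{(2)}_i$ collects the contribution $y^{(2)}_i\langle X',A_i\rangle$, the term $-t'b_iy^{(2)}_i$ from $-t^{(1)}b^Ty^{(2)}$, and $u\,y^{(2)}_i$. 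The matrix paired with $X^{(2)}$ collects $-\sum_i y'_iA_i$ from the first sum, $t'C$, and $uI$ from $u\tr(X^{(2)})$. The $t^{(2)}$ coefficient collects $b^Ty'$, $-\langle X',C\rangle$ and $-uM$.

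The three inequalities then follow by choosing three kinds of test strategies for the second player.

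\begin{enumerate}
\item Taking $y^{(2)}=e_i$, with all other blocks zero, gives $\langle A_i,X'\rangle-t'b_i+u\ge v$, which is the first inequality.
\item Taking $X^{(2)}=ww^T$ for an arbitrary unit vector $w$, with all other blocks zero, gives $w^T\bigl(t'C-\sum_i y'_iA_i\bigr)w+u\ge v$ for every unit vector $w$. This is exactly $t'C-\sum_i y'_iA_i\succeq (v-u)I$.
\item Taking $t^{(2)}=1$, with all other blocks zero, gives $b^Ty'-\langle X',C\rangle-uM\ge v$, which is the third inequality.
\end{enumerate}

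Each test strategy is a diagonal block matrix of trace one, so it is admissible. The only step needing any care is the bookkeeping in the regrouping, in particular getting the sign of the transposed Dantzig terms right and seeing that $u\tr(X^{(2)})=\langle X^{(2)},uI\rangle$ produces the identity shift. Beyond that there is no real obstacle.
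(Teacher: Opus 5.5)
Your proposal is correct and follows the paper's proof. Both regroup the payoff \eqref{eq:payoff_G_m} as a linear function of the second player's blocks and argue that a violated inequality would give player~2 a strategy pushing the payoff below $v$. You simply make the test strategies explicit ($y^{(2)}=e_i$, $X^{(2)}=ww^T$, $t^{(2)}=1$), where the paper leaves them implicit.
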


\begin{proof}
For any strategy $\bar{Y} = \diag(X^{(2)},\diag(y^{(2)}),t^{(2)})$ 
of the second player, substituting $\bar{X} = \diag(X',y',t',u)$
into the payoff function~\eqref{eq:payoff_G_m} and rearranging the terms gives
\[
  \label{eq:payoff_Bob}
  \begin{array}{rcl}
  p(\bar{X},\bar{Y})& = & \langle t'C - \sum\limits_{i=1}^m y_i'A_i +uI, X^{(2)} \rangle 
  + \sum\limits_{i=1}^m y_i^{(2)}(\langle X', A_i \rangle - t'b_i +u)\\
  & & + \ t^{(2)}( b^Ty' - \langle X', C \rangle - uM) \, .
  \end{array}
\]
If the assertion were not true, then there would exist a strategy for player~2
which ensures a payoff to player 1 that is strictly smaller than the value 
$v$ of the game.
\end{proof}

A similar statement is true for optimal strategies of the second player.

\begin{lemma} \label{le:payoff_bound_2}
Given a pair $(\primal,\dual)$, 
assume that its modified Dantzig game $G_M$ has value $v\ge 0$ and an optimal 
strategy $\bar{Y} = \diag(X^*,y^*,t^*)$ for the second player.
Then
\[
  \label{eq:le:payoff_bound_2_v}
  \begin{array}{rcl}
    t^*b_i - \langle A_i,X^*\rangle  & \le & v \quad \text{for } i \in [m],\\
    \sum\limits_{i=1}^m y_i^*A_i -t^*C &\preceq & vI, \\
    \langle C, X^*\rangle - b^Ty^* &\le & v,\\
    \tr(X^*)+\sum\limits_{i=1}^m y_i^* -t^*M & \le & v.
  \end{array}
\]
\end{lemma}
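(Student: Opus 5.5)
The plan is to argue exactly as in Lemma~\ref{le:payoff_bound}, with the roles of the players exchanged. Fix the optimal strategy $\bar{Y} = \diag(X^*,y^*,t^*)$ of the second player. Substituting it into the payoff function~\eqref{eq:payoff_G_m} and grouping terms by the blocks of the first player's strategy $\bar{X} = \diag(X^{(1)},y^{(1)},t^{(1)},u)$, we get
\[
p(\bar{X},\bar{Y}) = \sum_{i=1}^m y_i^{(1)}\bigl(t^*b_i - \langle A_i,X^*\rangle\bigr) + \Bigl\langle X^{(1)}, \sum_{i=1}^m y_i^*A_i - t^*C\Bigr\rangle + t^{(1)}\bigl(\langle C,X^*\rangle - b^Ty^*\bigr) + u\bigl(\tr(X^*)+\ones^Ty^* - t^*M\bigr).
\]
Because $\bar{Y}$ is optimal for the second player, $p(\bar{X},\bar{Y}) \le v$ for every strategy $\bar{X}$ of the first player.

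Next, evaluate this inequality on suitable pure-like strategies of the first player. Choosing $y^{(1)} = e_i$ with all other blocks zero yields $t^*b_i - \langle A_i,X^*\rangle \le v$ for each $i\in[m]$. Choosing $t^{(1)}=1$ yields the third inequality, and choosing $u=1$ yields the fourth.

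For the matrix inequality, take $X^{(1)} = xx^T$ with $\|x\|=1$, which is a density matrix. This gives $x^T\bigl(\sum_i y_i^*A_i - t^*C\bigr)x \le v = x^T(vI)x$ for all unit vectors $x$, that is, $\sum_i y_i^*A_i - t^*C \preceq vI$.

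No real obstacle arises. The only points to check are that each chosen $\bar{X}$ has the block-diagonal form~\eqref{eq:strategies} with trace one, which holds, and that the rank-one choice in the $X^{(1)}$-block handles the semidefinite constraint.
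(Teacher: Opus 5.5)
Your proposal is correct and is essentially the paper's approach: the paper gives no separate proof and only says the statement is analogous to Lemma~\ref{le:payoff_bound}, and your argument makes that explicit. You rewrite the payoff against the fixed $\bar{Y}$ and test $p(\bar{X},\bar{Y})\le v$ on the first player's extreme strategies, using the rank-one choice $X^{(1)}=xx^T$ for the matrix inequality; note that the hypothesis $v\ge 0$ is never actually needed.
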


Now we can prove Theorem \ref{th:reduction-unbounded}.

\begin{proof}[Proof of Theorem~\ref{th:reduction-unbounded}]
Consider a pair $(\primal,\dual)$ and its 
modified Dantzig game $G_M$. Assume that a strictly unbounded direction
exists in either $\primal$ or $\dual$. We already know that
the value $v$ of $G_M$ is positive.
Consider an optimal strategy of the form $\diag (X',y',t',u) $ for player 1 in $G_M$ and let $\diag(X^{(2)}, y^{(2)}, t^{(2)} )$ be some strategy for player 2.

We claim that $0 < u < 1$. To see this, consider the payoff of the first player
as described in \eqref{eq:payoff_G_m}. If $u=0$ then $\diag (X',y',t')$ would be a strategy of the unmodified Dantzig game $G$ which guarantees a positive payoff to player 1 even though the value of $G$ is zero. If $u=1$ then we obtain
$X'=0, y'=0, t'=0$, but this contradicts the optimality of the first player's
strategy in $G_M$. Namely, the second player can, for example, choose $t^{(2)}=1$ 
which leads to the negative payoff $-M$ for the first player.

Since $u>0$, for every optimal strategy $\diag(X^*,y^*,t^*)$
of the second player, the last diagonal strategy of the first player must be
a best response. Hence,
\begin{equation} \label{eq:v}
v \ = \ \tr(X^{*})+ \ones^T y^{*} - t^{*}M \ = \ (1-t^{*})- t^{*}M \ = \ 1-(1+M)t^{*},
\end{equation}
where the second equality holds because 
every strategy has trace~1.

Now we show that $u\ge v$. Assume that $v>u$. 
Then, again, $\diag (X',y',t')$ would be a strategy of the unmodified 
Dantzig game $G$ which guarantees a positive payoff to player 1 by Lemma~\ref{le:payoff_bound}.
This contradicts that $G$ has value 0.
Thus, we have $v\le u < 1$ and then
\eqref{eq:v} implies
$t^*>0$.

Next, we show that every optimal strategy for the first player is of the form $\diag(X',y',0,v)$, that is, 
$t'=0$ and $u=v$. Since we know that $u\ge v$, 
we need to prove $u \le v$.
Using Lemma \ref{le:payoff_bound}, we see
\begin{equation}\label{eq:u-v}
\begin{array}{rcl}
 t'C - \sum\limits_{i=1}^m y_i'A_i + (u-v)I &\succeq & 0, \\ [9pt] 
\langle X', A_i \rangle - t'b_i +(u-v) &\ge & 0 \quad \text{for } i \in [m],\\[9pt]
b^T y' - \langle X', C \rangle +(v-u)M &\ge & v(1+M)
\end{array}
\end{equation}
and in addition, the trace 1-property of strategies gives
$\tr(X') + \ones^T y' + t' + (u-v) = 1-v.$

$\primal_{\aux}$ attains its minimum $\bar{w}$ for some feasible point $(\bar{X},\bar{y},\bar{w})$, as we have already seen in Remark \ref{rem:strictly-dual}.
By Lemma \ref{le:payoff_bound_2}, $(X^*/t^*, y^*/t^*, v/t^*)$ is feasible for $\primal_{\aux}$ 
and therefore $\bar{w} \le v/t^* = \frac{v(M+1)}{1-v}$ due to \eqref{eq:v}.
Further,
\begin{eqnarray*}
v-u & \ge & \underbrace{(u-v)}_{\ge 0} 
  \underbrace{(\tr(\bar{X}) + \sum\nolimits_{i} \bar{y}_i -\bar{w} - M)}_{\le -1} 
   +\underbrace{t'}_{\ge 0} \underbrace{(\langle C,\bar{X}\rangle - b^T\bar{y} - \bar{w} )}_{\le 0} \\
& & + \ \langle \underbrace{X'}_{\succeq 0}, 
  \underbrace{\sum\nolimits_{i} \bar{y}_i A_i -\bar{w} I -C}_{\preceq 0}  \rangle
  + \sum\nolimits_i \underbrace{y_i'}_{\ge 0} 
    \underbrace{(b_i - \langle A_i, \bar{X} \rangle - \bar{w})}_{\le 0} \\
& = & \langle \bar{X}, (u-v)I + t'C - \sum\nolimits_i y_i'A_i\rangle + \sum\nolimits_i \bar{y}_i ((u-v)-t'b_i + \langle X',A_i \rangle)\\
& & -(u-v)M - \langle X', C\rangle + b^T y' 
  + \bar{w} (v-u-t'-\tr(X')- \ones^T y')\\
& \ge & 0 + v(M+1) + \bar{w}(v-1)\\
& \ge & 0 \, ,
\end{eqnarray*}
where the equality comes from rearranging
the terms and 
the penultimate inequality comes
from~\eqref{eq:u-v}.
This shows $u=v$ and, since in the preceding chain of inequalities all inequalities become
equations, further
\begin{equation} \label{eq:w^*}
\bar{w} \ = \ \frac{v(M+1)}{1-v}.
\end{equation}
Therefore, the inequalities \eqref{eq:u-v} give
\begin{equation} \label{eq:proof}
\begin{array}{rcl}
 t'C - \sum\limits_{i=1}^m y_i'A_i &\succeq & 0, \\ [9pt]
\langle X', A_i \rangle - t'b_i & \ge & 0 \quad \text{for } i \in [m], \\ [9pt]
b^T y' - \langle X', C \rangle &\ge & v(1+M)>0.
\end{array}
\end{equation}
Finally, we prove that $t'=0$. Assuming $t'>0$ gives
\begin{equation*}
p(\diag(X',y',0,0),\diag(X',y',0)) 
\ = \ \langle X' , \sum_{i=1}^m y_i'A_i \rangle - \langle X' , \sum_{i=1}^m y_i'A_i \rangle 
\ = \ 0.
\end{equation*}
However, then the inequalities \eqref{eq:proof} lead to a contradiction:
\begin{eqnarray*}
0 &= & \frac{1}{t'}\, p(\diag(X',y',0,0),\diag(X',y',0)) \\
& = & \sum_{i=1}^m y_i' \left\langle \frac{1}{t'} X', A_i \right\rangle - \left\langle \sum_{i=1}^m \frac{1}{t'} y_i'A_i , X' \right\rangle\\
&\ge & b^T y' - \left\langle C , X' \right\rangle \ > \ 0 \, ,
\end{eqnarray*}
where the positivity in the last step comes from the last line in~\eqref{eq:proof}.
Hence, $t'=0$. Then the inequalities~ \eqref{eq:proof} simplify to
\[
  \sum\nolimits_{i=1}^m y_i' A_i \preceq 0, \qquad
  \langle X', A_i \rangle  \geq 0 \ \text{ for } i \in [m], \qquad
  b^T y' - \left\langle C , X' \right\rangle > 0,
\]
with $X' \succeq 0$ and $y' \geq 0$. The strict inequality forces at least
one of the following to hold:
\begin{enumerate}[label=(\roman*)]
  \item $\left\langle C , X' \right\rangle < 0$, in which case $X'$ is an unbounded
  direction for $\primal$ and hence a certificate that $\dual$ is
  infeasible; or
  \item $b^T y' > 0$, in which case $y'$ is an unbounded direction
  for $\dual$ and hence a certificate that $\primal$ is infeasible.
\end{enumerate}
This concludes the proof.
\end{proof}

We capture the preconditions in the bounded and unbounded
case into a single constraint qualification.

\begin{definition}
\label{de:constraint-qualification}
Given a pair of SDPs $(\mathcal{P},\mathcal{D})$, we say that the
\emph{constraint qualification} holds if 
\begin{enumerate}
\item either $\mathcal{P}$ and $\mathcal{D}$ are bounded and a pair
of strongly optimal solutions of $\mathcal{P},\mathcal{D}$ exists
\item or there exists a strictly unbounded direction in 
  $\mathcal{P}$ or $\mathcal{D}$.
\end{enumerate}
\end{definition}

Using this definition, we can combine Theorem~\ref{th:reduction},
Theorem~\ref{th:reduction-unbounded} 
and~\cite[Theorem 4.1]{semi-games} into the following 
summarizing formulation.

\begin{cor}[Full equivalence under the constraint qualification]
\label{co:summary}

1. For each zero-sum semidefinite game $G$, the optimal strategies of
$G$ can be expressed as the solutions of a semidefinite program.

2. For each pair of SDPs $(\mathcal{P},\mathcal{D})$ which satisfies
the constraint qualification, any equilibrium of the modified Dantzig game
$G_M$ yields either an optimal point of the pair of SDPs or
a certificate that $\mathcal{P}$ or $\mathcal{D}$ is infeasible.
\end{cor}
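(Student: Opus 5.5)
The proof assembles results already established, treating the two parts of the corollary separately.

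Part~1 is exactly \cite[Theorem 4.1]{semi-games}, recalled at the end of the preliminaries. For a zero-sum semidefinite game with payoff tensor $A$, the optimal strategies of the first player are the maximizers of $v$ subject to the constraint that $\sum_{i,j} X_{ij}A_{ijkl}$, viewed as a symmetric matrix in $(k,l)$, minus $vI$ is positive semidefinite, together with $X\succeq 0$ and $\tr(X)=1$. The second player is handled symmetrically. So we simply invoke that theorem.

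For Part~2 we split according to Definition~\ref{de:constraint-qualification}.

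\textbf{Case (1).} A pair of strongly optimal solutions exists. By the definition of the solution bound $M$ and Theorem~\ref{th:reduction}, the value of $G_M$ is zero. Theorem~\ref{th:reduction} then states that every optimal strategy of the second player has the form $\diag(X,y,t)$ with $t>0$, and that $(\frac1t X,\frac1t y)$ is a strongly optimal, hence optimal, point of $(\primal,\dual)$. Since an equilibrium of a zero-sum game consists of a pair of optimal strategies, its second component gives the required optimal point.

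\textbf{Case (2).} A strictly unbounded direction exists in $\primal$ or $\dual$. Lemma~\ref{le:unbounded-direction} guarantees that the infimum of $\primal_{\aux}$ is attained, so a genuine solution bound $M$ exists and $G_M$ is well defined as in the hypotheses of Theorem~\ref{th:reduction-unbounded}. That theorem says every optimal strategy $\diag(X',y',t',u)$ of player~1 has $t'=0$. Moreover, either $X'$ is an unbounded direction for $\primal$, certifying that $\dual$ is infeasible, or $y'$ is an unbounded direction for $\dual$, certifying that $\primal$ is infeasible. The first component of any equilibrium therefore yields the certificate.

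The one point needing care is that the two cases of the constraint qualification are mutually consistent, so that the dichotomy ``optimal point or certificate'' is unambiguous. If a strictly unbounded direction exists, then the corresponding program is unbounded. Hence case~(1), which requires both programs to be bounded, cannot hold simultaneously, and the value of $G_M$ is positive, since zero value would force strong optima by Theorem~\ref{th:reduction}. Thus the value of $G_M$ itself determines which conclusion applies. I expect no real obstacle beyond checking this compatibility and that $M$ is defined in each case; all the substance lies in the two theorems.
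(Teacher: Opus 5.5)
Your proposal is correct and takes the same route as the paper, which obtains the corollary by combining Theorem~\ref{th:reduction}, Theorem~\ref{th:reduction-unbounded} and \cite[Theorem 4.1]{semi-games} along the two cases of Definition~\ref{de:constraint-qualification}. Your additional checks are sound and make explicit what the paper leaves implicit: you use Lemma~\ref{le:unbounded-direction} to make $M$ a genuine solution bound in the unbounded case, and you note that the cases are mutually exclusive because a strictly unbounded direction forces $v>0$.
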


\section{Bounds on the (primal/dual) solutions of an SDP\label{se:M}}
The equivalence theorems in Section~\ref{se:reduction}
(Theorems~\ref{th:reduction} and~\ref{th:reduction-unbounded})
require a solution bound $M$ satisfying condition~\eqref{eq:def:M}.
We show that, when the input data consists of rational numbers, then 
we can compute such a bound
explicitly by appealing to coordinate bounds for
the isolated solutions of the KKT polynomial system associated
with the SDP  \cite{emt-dmm-jsc-20}.

Let $A_1,\dots,A_m,C\in\mathbb Z^{n\times n}$ be symmetric matrices and
$b\in\mathbb Z^m$. 
Consider the following primal--dual semidefinite programming pair, 

\begin{equation}
\label{eq:SDP-pd}
\begin{array}{c}
\min_{X\in \sym_n}\ \langle C, X\rangle
\quad \text{s.t.}\quad
\langle A_i, X\rangle =b_i, i \in [m],  \text{ and } X\succeq 0, \\ [12pt]
\max_{y\in \R^m}\ b^\top y
\quad\text{s.t.}\quad
S:=C-\sum\nolimits_{i=1}^m y_iA_i\succeq 0.
\end{array}
\end{equation}

Note that
the normal form applied here slightly differs from
the one in \eqref{eq:SDP_primal} and \eqref{eq:SDP_dual}.
By ignoring the semidefinite inequalities, 
the corresponding KKT polynomial system, 
in the variables
\[
(X,S,y)\in \sym_n \times \sym_n \times \R^m
\]
is 
\begin{equation}
\label{eq:KKT}
(S_{\mathrm{KKT}}):\quad
\begin{cases}
\langle A_i,X\rangle-b_i=0, & i \in [m],\\[3pt]
S - C + \sum\nolimits_{i=1}^m y_i A_i = 0,\\[6pt]
XS=0.
\end{cases}
\end{equation}
Let
\[
\tau_0:=1+\max \Big \{ \lg |(A_i)_{i,j}|, \lg |C_{i,j}|, \lg |b_i|\Big\}
\]
be a uniform upper bound on the bitsize (all logarithms are base $2$) 
on the elements of all
the matrices and vectors appearing in $(S_{\mathrm{KKT}})$.
The characteristics of the system are as follows:
\begin{itemize}
\item the number of variables is
$ N=\dim(\sym_n)+\dim(\sym_n)+m=n(n+1)+m$,
\item it has more equations than unknowns (overdetermined); the number of equations is $p=m+\frac{n(n+1)}{2}+n^2=m+\frac{3n^2+n}{2}$, and
\item all the equations have maximal total degree at most $d=2$.
\end{itemize}

To apply the root bounds from \cite{emt-dmm-jsc-20}, we need a square system; as many equations as variables. 
For this we use a result by Giusti and Heintz \cite[Theorem~16]{emt-dmm-jsc-20},
that tells us that there exist $N$ generic integer linear combinations of the $p$ equations
forming a square polynomial system $(\widehat{S}_{\mathrm{KKT}})$.
Then, every irreducible component of the variety (zero set) 
of the new system, $\V(\widehat{S}_{\mathrm{KKT}})$, is either a component of the variety of the original system 
$\V(S_{\mathrm{KKT}})$ or an isolated point.
In particular, all isolated solutions of~\eqref{eq:KKT} are preserved.
If the coefficients of the original equations have bitsize at most $\tau$, then 
the squared-up system satisfies the explicit height bound 
\begin{equation}
\label{eq:tau_1}
\tau_1=\tau+N+\lg p
= \tau + (n(n+1)+m) + \lg(m+\tfrac{3n^2+n}{2}).
\end{equation}

Now we apply \cite[cor.~17]{emt-dmm-jsc-20} to obtain the following bounds for the isolated solutions of \eqref{eq:KKT}.

\begin{thm}[Explicit coordinate and separation bounds]
\label{thm:bounds}
Let $\gamma$ be an isolated solution of polynomial system
$(\widehat{S}_{\mathrm{KKT}})$ (and hence an isolated solution of
$({S}_{\mathrm{KKT}})$).
Let $N=n(n+1)+m$ and $\tau_1$ be given by~\eqref{eq:tau_1}.
Then, for every nonzero coordinate $\gamma_k\neq 0$,
it holds
\[
2^{-\eta_1}\le |\gamma_k|\le 2^{\eta_1},
\]
where
\[
\eta_1
=
\frac{N^2-N}{2}+2^N+N(\tau+N+2)2^{N-1}
= \sOO(n^4 \, m^2\, 2^{n(n+1)+m} \, \tau ),
\]
and the $\sOO(\cdot)$ notation ignores the (poly)logarithmic factors.
\end{thm}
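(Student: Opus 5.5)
The plan is to apply the black-box root bound \cite[Cor.~17]{emt-dmm-jsc-20} to the squared-up system $(\widehat{S}_{\mathrm{KKT}})$ and then read off the parameters. That corollary states the following. Consider a square system of $N$ polynomials in $N$ variables, each of total degree at most $d$ and with integer coefficients of bitsize at most $\tau_1$. Then every nonzero coordinate of every isolated complex solution satisfies $2^{-\eta}\le|\gamma_k|\le 2^{\eta}$. The exponent $\eta$ is explicit in $N$, $d$, $\tau_1$; up to lower-order terms it is $\eta \approx \lg\binom{\text{\#monomials}}{} + d^{N-1}N\tau_1$. It comes from a resultant (u-resultant/eliminant) argument.

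First I check the hypotheses. The KKT system \eqref{eq:KKT} consists of the linear equations $\langle A_i,X\rangle=b_i$, the linear equations $S-C+\sum y_iA_i=0$ (one per entry of the symmetric part), and the bilinear equations $XS=0$. So every polynomial has degree at most $d=2$ and integer coefficients of bitsize at most $\tau_0\le\tau$. The Giusti--Heintz step \cite[Thm.~16]{emt-dmm-jsc-20} yields $N$ integer linear combinations with coefficient bitsize at most $N+\lg p$. Hence $(\widehat S_{\mathrm{KKT}})$ is square, still of degree at most $2$, and has coefficient bitsize at most $\tau_1$ as in \eqref{eq:tau_1}. Every isolated point of $\V(S_{\mathrm{KKT}})$ stays isolated in $\V(\widehat S_{\mathrm{KKT}})$. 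Conversely, an isolated point $\gamma$ of $\V(\widehat S_{\mathrm{KKT}})$ lies either on a positive-dimensional component of $\V(S_{\mathrm{KKT}})$ or is an isolated point. The statement concerns isolated solutions of $(\widehat S_{\mathrm{KKT}})$ in either case, so the corollary applies directly.

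Next I substitute $d=2$, the number of variables $N=n(n+1)+m$, and $\tau_1$ into the formula. The bound has three contributions. The term $d^{N-1}N(\tau_1+\dots)$ becomes $2^{N-1}N(\tau+N+2)$, after absorbing $\lg p$ and the constants into "$+N+2$". The term $2^N$ is a mixed-volume/Bézout count. The term $\tfrac{N^2-N}{2}$ comes from the binomial/multinomial factor bounding the height of the eliminant. Adding these gives exactly $\eta_1$. The asymptotic form then follows from $N=\Theta(n^2+m)$ and $N^2\cdot 2^{N-1}\cdot\tau = \sOO(n^4m^2 2^{n(n+1)+m}\tau)$, using $N^2\le O(n^4+m^2)\le O(n^4m^2)$ and $\lg p=O(\lg(n+m))$.

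The main obstacle is the bookkeeping that makes the constants in the cited corollary match $\eta_1$ exactly. In particular, I have to decide whether $\tau$ or $\tau_1$ enters and how the $\lg p$ term is absorbed. I would do this by writing out the corollary's bound with the Bézout number $2^N$ for the degree, and bounding $\lg p\le N$ so that $\tau_1\le\tau+2N$. The lower bound $2^{-\eta_1}$ follows symmetrically: it is the same estimate applied to the reciprocal polynomial of the univariate eliminant for the coordinate $\gamma_k$.
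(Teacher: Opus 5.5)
Your route matches the paper's. The paper gives no argument beyond squaring up $(S_{\mathrm{KKT}})$ via Giusti--Heintz \cite[Thm.~16]{emt-dmm-jsc-20} and invoking \cite[Cor.~17]{emt-dmm-jsc-20} with $N$ variables, degree $d=2$ and coefficient bitsize $\tau_1$. The approach is therefore fine, but two details in your write-up need correcting.

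First, your own bookkeeping does not support your claim that the substitution gives ``exactly $\eta_1$''. The term $\lg p$ grows with $n$ and $m$, so it cannot be absorbed into the constant $2$ of $\tau+N+2$. Your estimate $\tau_1\le\tau+2N$ would turn $N(\tau_1+N+2)2^{N-1}$ into $N(\tau+3N+2)2^{N-1}$, not $N(\tau+N+2)2^{N-1}$. Substituting $\tau_1$ directly gives $N(\tau_1+N+2)2^{N-1}$. This is how the paper writes the analogous term, with $\bar\tau_1$, in Corollary~\ref{cor:coord-bd-Paux}. The statement also introduces $\tau_1$ and then never uses it, so its $\tau$ is presumably meant to be $\tau_1$. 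None of this affects the asymptotic form.

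Second, your ``conversely'' dichotomy is off. An isolated point of $\V(\widehat S_{\mathrm{KKT}})$ cannot lie on a positive-dimensional component of $\V(S_{\mathrm{KKT}})$, because $\V(S_{\mathrm{KKT}})\subseteq\V(\widehat S_{\mathrm{KKT}})$. Moreover, squaring up can create spurious isolated points outside $\V(S_{\mathrm{KKT}})$ altogether. This is harmless: the cited bound applies to every isolated point of the square system, and every isolated KKT solution is among them.
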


\subsection{Reformulation of the auxiliary SDP}

To obtain the solution bound for the auxiliary SDP 
$\primal_{\aux}$, see \eqref{eq:w_defi_SDP_primal},
using the KKT system $(S_{\mathrm{KKT}})$,
we should rewrite it (and its dual) in the standard equality form 
that appears in \eqref{eq:SDP-pd}
and involves a single positive semidefinite matrix.

The SDP $\primal_{\aux}$ is transformed as follows.
We introduce slack variables
$s \in \R_+^m$, $Z \in \sym_n^+$, and $r \in \R_+$.
Then the constraints can be rewritten as
\begin{eqnarray*}
\langle A_i, X\rangle + w - s_i &=& b_i,\quad i\in[m], \\
\sum\nolimits_{i=1}^m y_i A_i - w I_n + Z &=& C, \\
\langle C, X\rangle - b^\top y - w + r &=& 0,
\end{eqnarray*}
where $X \succeq 0$, $Z \succeq 0$, $y \ge 0$, $s \ge 0$,
$w \ge 0$, and $r \ge 0$.
We now encode all variables into a single block-diagonal 
(positive semidefinite) matrix
\[
\mathcal{X}
:=
\diag (X, Z,  \diag(y), \diag(s), w, r ) \in\ \sym_{2n+2m+2}^+.
\]
In this setting, $\primal_{\aux}$, in standard primal SDP form,
becomes
\[
\min \ \langle \mathcal{C}, \mathcal{X} \rangle
\quad \text{s.t.}\quad
\langle \mathcal{A}_j, \mathcal{X} \rangle = \beta_j,\quad j \in [\bar m],
\quad
\mathcal{X} \succeq 0,
\]
where $\mathcal{C}$ is a matrix that  extracts the $w$-component from $\mathcal{X}$ 
and the matrices $\mathcal{A}_j$ encode the linear equalities.
Consequently, the number of equality constraints is
\[
\bar m = m + \frac{n(n+1)}{2} + 1,
\]
as it accounts for the $m$ scalar constraints, 
the $\frac{n(n+1)}{2}$ scalar equations arising from the matrix identity, and the final scalar constraint.
Finally, the dimension of the semidefinite variable is
$\mathcal{X} \in S_{2n+2m+2}^+$.
This reformulation allows us to apply the standard KKT conditions for semidefinite programming, leading to a polynomial system in the entries of $\mathcal{X}$ and of the dual slack matrix, to which 
we can apply the root bounds.

\begin{cor}[Coordinate bound for the auxiliary SDP]
\label{cor:coord-bd-Paux}
Let $\primal_{\aux}$ be as in~\eqref{eq:w_defi_SDP_primal}, with input data
$A_1, \dots, A_m, C \in \Z^{n\times n}$, and $b \in \Z^m$
having bitsize at most $\tau_0$. Consider its reformulation as a standard SDP over a single block variable
\[
\mathcal{X}\in S_{\bar n}^+, \quad \text{ where } \: \bar n:=2n+2m+2,
\]
with the number of equality constraints being 
\[
\bar m:=m+\frac{n(n+1)}{2}+1.
\]

Let $\gamma$ be an isolated solution of the squared-up KKT system associated with this formulation. Then, for every nonzero coordinate $\gamma_k$, it holds 
\[
2^{-\bar\eta_1}\le |\gamma_k|\le 2^{\bar\eta_1},
\]
where
\[
\bar N:=\bar n(\bar n+1)+\bar m,
\qquad
\bar p:=\bar m+\frac{\bar n(\bar n+1)}{2}+\bar n^2,
\qquad 
\bar\tau_1:=\tau_0+\bar N+\lg(\bar p),
\]
and
\[
\bar\eta_1
=
\frac{\bar N^2-\bar N}{2} + 2\bar N + \bar N(\bar\tau_1+\bar N+2)\,2^{\bar N-1}
= \tau_0 \, 2^{\sOO((n+m)^2)} ,
\]
where the $\sOO(\cdot)$ notation ignores the (poly)logarithmic factors.
In particular, every nonzero entry of the blocks
$ X, Z, y, s, w$, and $r$
of an isolated KKT solution is bounded in absolute value by $2^{\bar\eta_1}$.
\end{cor}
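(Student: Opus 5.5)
The plan is to obtain Corollary~\ref{cor:coord-bd-Paux} as a direct specialization of Theorem~\ref{thm:bounds}. First, I would verify that the reformulated auxiliary program really has the standard form~\eqref{eq:SDP-pd}, with block variable $\mathcal{X}\in\sym_{\bar n}^+$, $\bar n = 2n+2m+2$, and $\bar m$ equality constraints. To make the blocks of $\mathcal{X}$ match the constraints, every off-block-diagonal entry of $\mathcal{X}$ must be forced to vanish. I would handle this either by extra equality constraints, which only increase $\bar m$ polynomially and leave the stated $2^{\sOO((n+m)^2)}$ form unchanged, or by noting that these entries do not appear in the constraints and that isolated KKT solutions may be restricted to the block-diagonal face. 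I would also check that the data $\mathcal{C},\mathcal{A}_j,\beta_j$ are integer with entries drawn from $\{0,\pm1\}$, from entries of $A_i$, $C$, $b$, and from the identity. Their bitsize is therefore at most $\tau_0$ (taking $\tau_0\ge 1$, which already covers the unit entries).

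Next, I would form the KKT system~\eqref{eq:KKT} for this SDP in the variables $(\mathcal{X},\mathcal{S},\bar y)\in\sym_{\bar n}\times\sym_{\bar n}\times\R^{\bar m}$. Its parameters are $\bar N=\bar n(\bar n+1)+\bar m$ variables, $\bar p=\bar m+\frac{\bar n(\bar n+1)}{2}+\bar n^2$ equations, and degree at most~$2$. These are exactly the quantities in the statement, obtained by substituting $(n,m)\mapsto(\bar n,\bar m)$ into the count preceding Theorem~\ref{thm:bounds}. I would then apply the Giusti--Heintz squaring-up with height $\bar\tau_1=\tau_0+\bar N+\lg\bar p$ from~\eqref{eq:tau_1}, and invoke Theorem~\ref{thm:bounds} (i.e.\ \cite[cor.~17]{emt-dmm-jsc-20}) to get $2^{-\bar\eta_1}\le|\gamma_k|\le 2^{\bar\eta_1}$ for every nonzero coordinate of an isolated solution. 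I would take care to use the formula for $\bar\eta_1$ exactly as given by the cited corollary, with $\bar\tau_1$ in place of $\tau$. The last sentence of the corollary then follows because the entries of $X,Z,y,s,w,r$ are coordinates of $\mathcal{X}$, which are coordinates of $\gamma$.

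Finally, I would derive the asymptotic form. Since $\bar n=\Theta(n+m)$ and $\bar m=O(n^2+m)$, we have $\bar N=O((n+m)^2)$ and $\lg\bar p=O(\log(n+m))$. The dominant term is $\bar N(\bar\tau_1+\bar N+2)2^{\bar N-1}$, which is bounded by $\tau_0\cdot\mathrm{poly}(n+m)\cdot 2^{O((n+m)^2)}$. This gives $\tau_0\,2^{\sOO((n+m)^2)}$, with the polynomial factors absorbed into the exponent.

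The main obstacle is the first step: making sure the single-block reformulation is faithful. That means the off-diagonal entries of $\mathcal{X}$ must be controlled, and the count $\bar m$ must match the stated value. If extra zero-constraints turn out to be needed, I expect to argue that they change only lower-order terms in $\bar N$ and not the final asymptotic bound. The rest is bookkeeping on top of Theorem~\ref{thm:bounds}.
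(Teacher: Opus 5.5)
Your proposal is correct and takes the paper's route: the corollary is obtained by substituting the reformulated parameters $(\bar n,\bar m)$ and the data bitsize into the KKT count and into Theorem~\ref{thm:bounds}, and then reading off the asymptotic form of $\bar\eta_1$. The obstacle you flag is not a real one, and there are three points of bookkeeping. First, $\bar N=\bar n(\bar n+1)+\bar m$ already treats $\mathcal{X}$ as a full symmetric matrix whose off-block entries simply do not occur in the $\bar m$ constraints, and the root bound concerns isolated solutions of that polynomial system whether or not the reformulation is faithful; adding zero-constraints is therefore unnecessary, and it would contradict the stated $\bar\eta_1$, since it changes $\bar m$ by $\Theta((n+m)^2)$ rather than by a lower-order amount. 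Second, literal substitution into Theorem~\ref{thm:bounds} produces the term $2^{\bar N}$ where the statement prints $2\bar N$, which does not affect the asymptotic form. Third, writing the off-diagonal equations of the matrix identity with integer symmetric $\mathcal{A}_j$ requires doubling them, so the data bitsize can reach $\tau_0+1$ rather than $\tau_0$; this is equally harmless.
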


\begin{cor}[Solution bound for $\primal_{\aux}$]
\label{cor:M-bound-Paux}
Assume that the infimum of $\primal_{\aux}$ is attained
and that there exists an optimal solution
$ (X^*,Z^*,y^*,s^*,w^*,r^*)$
that corresponds to an isolated point (solution) of the KKT system.
Then, the entries of $X^*$ and $y^*$ satisfy
\[
\tr(X^*) \le n\,2^{\bar\eta_1},
\qquad
\mathbf{1}^\top y^* \le m\,2^{\bar\eta_1},
\]
and therefore
\[
\tr(X^*)+\mathbf{1}^\top y^*+1 \le (n+m)\,2^{\bar\eta_1}+1.
\]
In particular,
\[
M := (n+m)\,2^{\bar\eta_1}+1 = 2^{\tau_0 \, 2^{\sOO((n+m)^2)}}
\]
is a valid solution bound satisfying \eqref{eq:def:M}.
\end{cor}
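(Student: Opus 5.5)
The plan is to read the bound off Corollary~\ref{cor:coord-bd-Paux}, applied to the optimal solution $(X^*,Z^*,y^*,s^*,w^*,r^*)$ that the hypothesis supplies. The first step is to confirm that this solution, after adjoining the dual variables of the reformulated standard-form SDP, is an isolated solution of the KKT system. Since strong duality holds for the auxiliary pair by Remark~\ref{rem:strictly-dual}, I would take optimal dual multipliers $(\lambda,\mathcal{S})$ for the reformulation and form the triple $(\mathcal{X}^*,\mathcal{S},\lambda)$. Complementary slackness gives $\mathcal{X}^*\mathcal{S}=0$, so the triple satisfies $(S_{\mathrm{KKT}})$. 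The hypothesis, read in the intended way, is that this triple is an isolated point of $\V(S_{\mathrm{KKT}})$. By the Giusti--Heintz squaring-up step it is then also an isolated solution of $(\widehat S_{\mathrm{KKT}})$.

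With isolatedness in hand, Corollary~\ref{cor:coord-bd-Paux} says every nonzero coordinate of the triple has absolute value at most $2^{\bar\eta_1}$. Zero coordinates trivially satisfy the same bound. In particular this applies to the diagonal entries $X^*_{jj}$, $j\in[n]$, and to the entries $y^*_i$, $i\in[m]$, which are coordinates of the block $\mathcal{X}^*$. Because $X^*\succeq 0$ and $y^*\ge 0$, these entries are nonnegative. Summing them gives $\tr(X^*)=\sum_j X^*_{jj}\le n2^{\bar\eta_1}$ and $\ones^Ty^*\le m2^{\bar\eta_1}$. Adding the two bounds and $1$ yields exactly \eqref{eq:def:M} with $M=(n+m)2^{\bar\eta_1}+1$.

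The asymptotic form follows by substituting $\bar\eta_1=\tau_0 2^{\sOO((n+m)^2)}$. The factor $(n+m)$ and the added $1$ are absorbed into the exponent, since $\lg(n+m)$ is negligible against $2^{\sOO((n+m)^2)}$.

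The main obstacle is the first step: making precise that the optimal solution together with suitable dual variables is an isolated KKT point. Optimal faces of SDPs can be positive-dimensional, so this does not hold in general, and the statement assumes it. Before invoking Corollary~\ref{cor:coord-bd-Paux}, I would check carefully that the hypothesis is used exactly in this form, meaning isolatedness of the full primal--dual triple and not of $\mathcal{X}^*$ alone.
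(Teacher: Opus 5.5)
Your proposal is correct and follows the same route as the paper, which treats this corollary as an immediate consequence of Corollary~\ref{cor:coord-bd-Paux}. The argument in both is to bound each diagonal entry of $X^*$ and each entry of $y^*$ by $2^{\bar\eta_1}$, use their nonnegativity, and sum. Your extra step of completing the primal optimum to a KKT triple via dual attainment and complementary slackness is not written out in the paper, where it is absorbed into the isolatedness hypothesis. That step is nonetheless valid, because the slack reformulation has a positive definite feasible point once $w$ is taken large.
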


\begin{example}
	\label{ex:opt-M-bd}
	The following example dating back to
    Khachiyan (see \cite{porkolab-khachiyan-1997} 
    or \cite{pataki-touzov-2024})
    demonstrates that asymptotically exponential behavior of the bitsize of the solutions of SDP is nearly optimal.
Set $B:=2^{\tau}$ and consider the SDP
\[
\begin{aligned}
	\min \quad & x_n \\
	\text{s.t. } \quad &
	\begin{pmatrix}
		x_i & x_{i-1}\\
		x_{i-1} & B
	\end{pmatrix} \succeq 0,
	\quad i \in [n], \\
	& x_0=\tfrac{1}{2} .
\end{aligned}
\]
The input (bit)size is polynomial in $n + \tau$.
Each constraint, there are $n$, is equivalent to $x_i \ge \frac{x_{i-1}^2}{B}$.
At the optimum value of the SDP, 
all the inequalities become equalities, 
because the objective function minimizes $x_n$.
Thus, the optimal values are $x_i^*=\tfrac{(x_{i-1}^*)^2}{B}$.
Using induction, we deduce that the optimal value is 
\[
x_n^*=\frac{x_0^{2^n}}{B^{2^n-1}}
=\frac{(1/2)^{2^n}}{(2^{\tau})^{2^n-1}}
=2^{-(\tau+1)2^n+ \tau}.
\]
\end{example}

\section{Examples\label{se:examples}}

By Theorem~\ref{th:reduction},
our reduction produces strongly optimal solutions of a pair $(\primal,\dual)$ whenever such solutions exist. 
By Theorem~\ref{th:reduction-unbounded},
we recover an unbounded direction of $\primal$ or $\dual$ whenever a strictly unbounded direction exists. We have argued that those are the most important cases to consider, and both these cases are fully covered by our reduction. There are, however, some pathological pairs of feasible SDPs $(\primal,\dual)$ where a duality gap exists
or the optimal values may not be attained and some pathological
pairs where not both SDPs are feasible but a strictly unbounded
direction does not exist. We provide some of examples of those 
kinds of pairs $(\primal,\dual)$ in this section.

Our first example considers a pair of SDPs $(\primal,\dual)$ where $\primal$ and $\dual$ are both infeasible and our reduction yields a dual unbounded direction, which is a certificate that $\primal$ is infeasible.

\begin{example} \label{ex:SDP_both_infeasible_works}
Consider the pair of SDPs given by
\begin{equation}
\begin{array}{rcl}
\primal & : & \inf \left\{ \left\langle \begin{pmatrix}
0 & -1\\
-1 & 0
\end{pmatrix}, X\right\rangle \, : \, \left\langle \begin{pmatrix}
-1 & 0\\
0 & 0
\end{pmatrix}, X\right\rangle \ge 1, \, X\succeq 0 \right\}, \\
\dual & : & \sup \left\{ y \, : \, y \begin{pmatrix}
-1 & 0\\
0 & 0
\end{pmatrix} \preceq \begin{pmatrix}
0 & -1\\
-1 & 0
\end{pmatrix}, \, y\ge 0 \right\} .
\end{array}
\end{equation}
It is not difficult to see that neither $\primal$ nor $\dual$ is feasible.
The infimum of the auxiliary program
\begin{equation*} 
\begin{array}{rl}
\primal_{aux} \ : \ \inf & \left\{ w \ : \
\left\langle \begin{pmatrix}
-1 & 0\\
0 & 0
\end{pmatrix}, X\right\rangle + w \ge 1, \;
\begin{pmatrix}
y & -1\\
-1 & 0
\end{pmatrix} + w I_2 \succeq 0, \right. \\
& \quad \left. \left\langle \begin{pmatrix}
0 & -1\\
-1 & 0
\end{pmatrix},X \right\rangle - y -w \le 0, \, 
X \succeq 0, \, y \ge0, \, w\ge 0 \right\}
\end{array}
\end{equation*}
is attained for $(X^*,y^*,w^*)=(0,0,1)$, so we can chose $M=1$.
The modified Dantzig game $G_M$ is now defined by the payoff
$p(\bar{X},\bar{Y})$ being equal to
\begin{eqnarray*}
& &  y^{(1)} \left(t^{(2)} -\left\langle \begin{pmatrix}
-1 & 0\\
0 & 0
\end{pmatrix} , X^{(2)} \right\rangle \right) + 
\left\langle X^{(1)}, y^{(2)}\begin{pmatrix}
-1 & 0\\
0 & 0
\end{pmatrix} -  t^{(2)}\begin{pmatrix}
0 & -1\\
-1 & 0
\end{pmatrix} \right\rangle \\
& & + t^{(1)}\left(\left\langle \begin{pmatrix}
0 & -1\\
-1 & 0
\end{pmatrix}, X^{(2)}\right\rangle - y^{(2)})
 + u (\tr(X^{(2)})+ y^{(2)} - t^{(2)})\right).
\end{eqnarray*}
An optimal strategy for the second player is $\diag(X^{(2)},y^{(2)},t^{(2)})= \diag(0,\frac{2}{3}, \frac{1}{3})$ and an optimal strategy for the first player is $\diag(X^{(1)},y^{(1)},t^{(1)},u)= \diag(0,\frac{2}{3},0, \frac{1}{3})$. Note that $u=\frac{1}{3}$ is indeed the value of the game and further, $y^{(1)}=\frac{2}{3}$ is an unbounded direction of $\dual$.
\end{example}

This shows that our reduction can also hold in some cases that are not covered by Theorem \ref{th:reduction} and Theorem \ref{th:reduction-unbounded}. In particular, our reduction gives the desired result for a pair $(\primal,\dual)$ whenever the infimum in $\primal_{aux}$ is attained. This follows directly from the proof of Theorem \ref{th:reduction-unbounded}. However, there are also pairs
$(\primal,\dual)$ where the infimum in $\primal_{aux}$ is not attained and we still recover an unbounded direction.
To see this, consider a slight variation on Example \ref{ex:SDP_both_infeasible_works}, where the infimum in $\primal_{aux}$ is not attained.
\begin{example} 
Consider the pair
\begin{equation}
\begin{array}{rcl}
\primal & : & \inf \left\{ \left\langle \begin{pmatrix}
-2 & -1\\
-1 & -2
\end{pmatrix}, X\right\rangle \, : \, \left\langle \begin{pmatrix}
-1 & 0\\
0 & 0
\end{pmatrix}, X\right\rangle \ge 1, \, X\succeq 0 \right\}, \\
\dual & : & \sup \left\{ y \, : \, \begin{pmatrix}
-2+y & -1\\
-1 & -2
\end{pmatrix}\succeq 0, \, y\ge 0 \right\} .
\end{array}
\end{equation}
It is not difficult to see that neither the primal nor the dual program is feasible.
The infimum
\begin{equation*} 
\begin{array}{rl}
\primal_{aux} \ : \ \inf & \left\{ w \ : \
\left\langle \begin{pmatrix}
-1 & 0\\
0 & 0
\end{pmatrix}, X\right\rangle + w \ge 1, \;
\begin{pmatrix}
-2 + y & -1\\
-1 & -2
\end{pmatrix} + w I_2 \succeq 0, \right. \\
& \quad \left. \left\langle \begin{pmatrix}
-2 & -1\\
-1 & -2
\end{pmatrix},X \right\rangle - y -w \le 0, \, 
X \succeq 0, \, y \ge0, \, w\ge 0 \right\}
\end{array}
\end{equation*}
is equal to two but is not attained. In fact, for every $\varepsilon>0$, the point $(X^*,y^*,w^*)=(0,y^*,2+\varepsilon)$ is feasible for sufficiently large $y^*$, but the second condition can never be met for $w^*=2$. Choosing $M=1$, we can define the modified Dantzig game $G_M$ similarly as before and notice the same optimal strategies for both players as before.
In particular, we recover an unbounded direction of $\dual$ even though the infimum in $\primal_{aux}$ was not attained.
\end{example}

Now, let us consider an example of a degenerate pair of SDPs $(\primal,\dual)$ where both optimal values are attained but there is a duality gap. The previous reduction from \cite{semi-games} fails to give us any information about the optimal solutions at all.
In contrast, for any suitable choice of a solution bound $M$, the value of the game $G_M$ is strictly positive which proves that no pair of strongly optimal solutions exist for $(\primal,\dual)$.  This particular pair was introduced in \cite{dimou-equiv-2024}.

\begin{example} 
Consider the pair of SDPs
\begin{equation}
\begin{array}{rl}
\primal \ : \ \inf & \left\{ \left\langle \begin{pmatrix}
0 & 0 & 0\\
0 & 1& 0\\
0 & 0 & 1
\end{pmatrix}, X\right\rangle \, : \, \left\langle \begin{pmatrix}
0& -1 & 0\\
-1 & 0 & 0\\
0 & 0 & 1
\end{pmatrix}, X\right\rangle \ge 1, \right.\\
 &\left. \left\langle \begin{pmatrix}
0& 0 & 0\\
0 & -1 & 0\\
0 & 0 & 0
\end{pmatrix}, X\right\rangle \ge 0, \, X\succeq 0 \right\}\\
\dual \ : \ \sup & \left\{ y_1 \, : \, \begin{pmatrix}
0 & y_1 & 0\\
y_1 & 1 +y_2 & 0\\
0 & 0 & 1 -y_1
\end{pmatrix}\succeq 0, \, y_1,y_2\ge 0 \right\} .
\end{array}
\end{equation}
The infimum is one and it is attained by choosing, say, 
$X=\left( \begin{smallmatrix}
0 & 0 & 0\\
0 & 0 & 0\\
0 & 0 & 1
\end{smallmatrix} \right)$.
The supremum is zero and is attained for any $y\ge 0$ with $y_1=0$.
The infimum of the auxiliary program 
\begin{equation*}
\begin{array}{rl}
\primal_{aux} \ : \ \inf & \Big \{ w \, : \, \left\langle \begin{pmatrix}
0& -1 & 0\\
-1 & 0 & 0\\
0 & 0 & 1
\end{pmatrix}, X\right\rangle +w \ge 1, \left\langle \begin{pmatrix}
0& 0 & 0\\
0 & -1 & 0\\
0 & 0 & 0
\end{pmatrix}, X\right\rangle +w \ge 0, \\
& \begin{pmatrix}
0 & y_1 & 0\\
y_1 & 1+y_2 & 0\\
0 & 0 & 1-y_1
\end{pmatrix} + wI_3 \succeq 0,\, \left\langle \begin{pmatrix}
0 & 0 & 0\\
0 & 1 & 0\\
0 & 0 & 1
\end{pmatrix}, X\right\rangle - y_1 -w \le 0,\\
& w\ge0, \ X\succeq 0, \ y_1,y_2\ge 0 \Big\}.
\end{array}
\end{equation*}
is zero but is not attained. In fact, we can see that any value 
$\varepsilon>0$ can be attained by choosing $w=\varepsilon,\; y=0$ and $X=\left( \begin{smallmatrix}
\frac{1}{\varepsilon} & -1 & 0\\
-1 & \varepsilon & 0\\
0 & 0 & 0
\end{smallmatrix} \right)$.
Our construction applied to the given SDP and $M=1$ yields the payoff
\begin{equation*}
\begin{array}{rcl}
p(\bar{X},\bar{Y})&= & y_1^{(1)} (t^{(2)} + 2 x_{12}^{(2)} - x_{33}^{(2)} ) + y_2^{(1)} x_{22}^{(2)} \\
& & + \left\langle X^{(1)}, \begin{pmatrix}
0 & -y_1^{(2)} & 0\\
-y_1^{(2)} & -y_2^{(2)} - t^{(2)} & 0\\
0 & 0 & y^{(2)}_1-t^{(2)}
\end{pmatrix}
\right\rangle \\
& & + t^{(1)}(x_{22}^{(2)} + x_{33}^{(2)} - y_1^{(2)})
 + u ( x_{11}^{(2)}+ x_{22}^{(2)}+ x_{33}^{(2)} + y_1^{(2)} + y_2^{(2)} - t^{(2)}) \, .
\end{array}
\end{equation*}
We see that for the game $G$ without the last additional term, the strategies 
\[
y^{(2)} = \left(0,\frac{1}{2}\right), 
\quad 
X^{(2)} = \frac{1}{2}
\begin{pmatrix}
1 & 0 & 0\\[3pt]
0 & 0 & 0\\[3pt]
0 & 0 & 0
\end{pmatrix},\quad t^{(2)}=0
\]
are indeed optimal and the game value is zero.

In our modified game, this is no longer the case as the payoff to the first player is now equal to $1$ for this strategy if player 1 plays $u=1$. Instead, the game value is positive and we see that no pair of strongly optimal solutions exists.
However, since there is a positive duality gap, we do not actually recover the optimal solutions from our reduction and we do not recover unbounded directions since those do not exist in this case.
\end{example}

\section{Outlook\label{se:outlook}}

We have elevated the equivalence of linear programming
and zero-sum matrix games to the case of semidefinite programming
and zero-sum semidefinite games. Dimou \cite{dimou-equiv-2024}
extended the underlying principles from \cite{semi-games}
to the case of conic games. At least certain ingredients 
of the present work also extend to the case of
conic games. However, we leave it as a future task to 
consider the generalization of our results to conic games or to other 
generalized classes in detail.

\medskip

\noindent{\bf Acknowledgments.}
The research was partially supported by the joint PROCOPE project ``Quantum 
games and polynomial optimization'' of the MEAE/MESR and the DAAD (57753345).

\bibliographystyle{abbrv}

\bibliography{Equibib}

\end{document}